\newtheorem{theorem}{Theorem}[section]
\newtheorem{lemma}[theorem]{Lemma}
\newtheorem{prop}[theorem]{Proposition}
\newtheorem*{untheorem}{Theorem} 
\newtheorem{question}[theorem]{Question}
\theoremstyle{definition}
\newtheorem{definition}[theorem]{Definition}
\newtheorem{notation}[theorem]{Notation}
\newtheorem*{undef}{Definition}
\newtheorem{example}[theorem]{Example}
\newtheorem{cor}[theorem]{Corollary}
\newtheorem{remark}[theorem]{Remark}
\newtheorem{non-example}[theorem]{Non-Example}
\numberwithin{equation}{section}
\newcommand{\setlist}[1]{\left\{ #1 \right\}}
\newcommand{\setdesc}[2]{\left\{ #1 : #2 \right\}}
\begin{document}

\title{Decompositions of differential equations from medical imaging}

\author{Douglas Weathers}
\address{Department of Mathematics and Statistics, Coastal Carolina University, Conway, SC 29526}
\email{wweathers@coastal.edu}
\thanks{The first author was supported on this work, which first appeared as his master's thesis, by a Chase Distinguished Research Assistantship from the University of Maine and a teaching assistantship from its Department of Mathematics and Statistics.}

\author{Benjamin L. Weiss}
\address{Department of Mathematics and Statistics, University of Maine, Orono, ME 04473}
\email{benjamin.weiss@maine.edu}

\subjclass{Primary: 34A05, Secondary: 05A10}

\date{\today}


\begin{abstract}

Studying medical imaging, Peter Kuchment and Sergey Lvin encountered an countable family of differential identities for sine, cosine, and the exponential function. Specifically if for a smooth function $u$ and a complex number $\lambda$ the minimal differential equation $u' = \lambda u$ held, then $u$ satisfied all of their identities (i). If $u'' = \lambda^2 u$, then $u$ satisfied all odd-indexed identities (ii). They were unable to determine (iii) if there were some other pattern as well. We realize their $n$-th identity as a polynomial $f_{n,\lambda}(u)$ in the variable $u$ that turns out to have integer coefficients. We construct combinatorial relations on the coefficients to provide an alternate proof of one of Kuchment and Lvin's results. We also isolate the part of $f_{n,\lambda}(u)$ that is linear in the variable $u$ to answer (iii) negatively, and describe how the analysis of the linear polynomial may connect to the analysis of the whole polynomial.




\end{abstract}

\maketitle

\section{Introduction}

\label{ch:one}

%
%
%
%
\noindent In the early 1990's, Kuchment and Lvin \cite{KL2} studied the mathematics behind Single Photon Emission Computed Tomography (SPECT), in which the patient is given some medication weakly labeled with $\gamma$-photons \cite{KL}. Discovering the distribution of the medication reduces to studying an attenuated Radon-type transform \cite{Kuc}, \cite{Dea}. Let $\lambda$ be a positive real number called the attenuation coefficient. Let $L$ be a line that is parametrized by the function $s\omega + t\omega^\perp$, where $s$ is a real number, $t$ is a varying parameter, $\omega$ is a unit vector in $\mathbb{R}^2$, and $\omega^\perp$ is $\omega$ rotated by ninety degrees clockwise. Then the attenuated Radon transform is given by

$$ R_\lambda \phi(L) = \int_{-\infty}^\infty \phi(s\omega + t\omega^\perp)e^{\lambda t} \; dt$$

\noindent where $\phi$ is a function from $\mathbb{R}^2$ to $\mathbb{R}$. 
In checking the range conditions required to recover the original function, the authors discovered that the following identity holds for all odd positive integers $n$ \cite{KL}:

$$ \sin^n x + \sum_{k=0}^{n-1} {{n}\choose{k}} \left( \prod_{m=0}^{n-k-1} \left( \frac{d}{dx} - \sin x + mi \right) \right) \sin^k x = 0.$$

\noindent More recently, the authors \cite{KL} generalized this identity in the following way:

\begin{definition}\label{K-L-poly} Let $n$ be a positive integer, $\lambda$ a complex number, $u = u(x)$ a smooth function, and $\partial = \frac{d}{dx}$. The $n$-th {\it Kuchment-Lvin (K-L) polynomial} parametrized by $\lambda$ is defined to be

$$ f_{n,\lambda}(u)  = u^n + \sum_{k=0}^{n-1} {{n}\choose{k}} \left(\prod_{m=0}^{n-k-1} \left( \partial - u + m\lambda \right) \right) u^k.$$
\end{definition}

\begin{notation} \label{Dd} In this paper $\alpha$ will always be an integer, and $u$ will be a smooth function. When $\alpha$ is non-negative, we will follow standard notation \cite{SvdP} that $u^{(\alpha)}$ will denote the function $u$ differentiated $\alpha$ times. We will also use $u^{(0)} = u$, and $u^{(1)} = u'$, and $u^{(2)} = u''$.\end{notation}

\noindent In \cite{KL}, the K-L polynomials were proven to vanish under the following conditions (Theorems 2 and 3 of \cite{KL} respectively):
	\begin{theorem}[The first Kuchment-Lvin identity]\label{kl1}If $u=u(x)$ is a smooth function and $\lambda$ a complex number such that $u' = \lambda u$, then $f_{n,\lambda}(u) = 0$ for all $n \ge 1$. \end{theorem}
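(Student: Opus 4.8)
The plan is to use the hypothesis $u' = \lambda u$ to convert the differential operator $\partial$ into a shift on powers of $u$, reducing the identity to a combinatorial statement about binomial coefficients. Writing $D_m = \partial - u + m\lambda$, the hypothesis gives $(u^j)' = j u^{j-1}u' = j\lambda u^j$, so that $D_m(u^j) = (j+m)\lambda u^j - u^{j+1}$. Hence each $D_m$ maps polynomials in $u$ to polynomials in $u$ (with coefficients in $\mathbb{C}[\lambda]$), and $f_{n,\lambda}(u)$ becomes an honest polynomial in $u$. I would also record that $D_m - D_{m'} = (m-m')\lambda$ is a scalar, so the operators $D_0, D_1, \ldots$ commute pairwise and the product $\prod_{m=0}^{n-k-1} D_m$ is order-independent; I may therefore peel off whichever factor is convenient.

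Next, set $g_{p,k} = \bigl(\prod_{m=0}^{p-1} D_m\bigr)u^k$. Peeling the factor $D_{p-1}$ and letting it act on $u^k$ first yields the recursion $g_{p,k} = (k+p-1)\lambda\, g_{p-1,k} - g_{p-1,k+1}$, with $g_{0,k} = u^k$. By induction on $p$ I would prove
\[ g_{p,k} = \sum_{i=0}^{p} (-1)^i \binom{p}{i}\lambda^{\,p-i}\,(k+i)^{\overline{p-i}}\, u^{k+i}, \]
where $(k+i)^{\overline{p-i}} = (k+i)(k+i+1)\cdots(k+p-1)$ is the rising factorial. The inductive step is exactly Pascal's rule $\binom{p-1}{i}+\binom{p-1}{i-1}=\binom{p}{i}$ combined with the absorption $(k+p-1)\,(k+i)^{\overline{p-1-i}} = (k+i)^{\overline{p-i}}$ coming from the first term, after reindexing the second term by $i \mapsto i-1$.

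Substituting into Definition~\ref{K-L-poly} and collecting the coefficient of $u^N$ by putting $k+i=N$, the factors $\lambda^{\,n-N}$ and $N^{\overline{n-N}}$ come out independent of $k$, so the coefficient of $u^N$ equals
\[ \lambda^{\,n-N}\,N^{\overline{n-N}}\sum_{k} (-1)^{N-k}\binom{n}{k}\binom{n-k}{N-k}. \]
The subset identity $\binom{n}{k}\binom{n-k}{N-k} = \binom{n}{N}\binom{N}{k}$ turns the inner sum into $\binom{n}{N}\sum_{k}(-1)^{N-k}\binom{N}{k} = \binom{n}{N}\,0^{N}$ by the binomial theorem. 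For $1 \le N \le n-1$ this is $0$; for $N=0$ the factor $N^{\overline{n}}=0^{\overline n}$ contains the factor $0$ and again kills the term. The one place the pattern breaks is $N=n$: there the outer summation index is capped at $k \le n-1$, so the alternating sum is missing its diagonal term $k=N$, and what would have been $0$ instead leaves the residue $-1$. Thus the double sum contributes exactly $-u^n$, which cancels the leading $u^n$ in $f_{n,\lambda}(u)$, and every other coefficient vanishes.

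I expect the main obstacle to be the bookkeeping rather than any single idea: proving the closed form requires carefully matching the index shift $k \mapsto k+1$ against the rising factorials, and in the final step one must treat the boundary values $N=0$ and $N=n$ by hand, since these are exactly the two indices where $0^N$ fails to vanish or the summation range is truncated. Once the formula for $g_{p,k}$ is in place, the remainder is a routine application of standard binomial identities.
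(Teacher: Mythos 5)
Your proposal is correct, and it takes a genuinely different route from the paper's proof. I checked the key steps: the recursion $g_{p,k} = (k+p-1)\lambda\, g_{p-1,k} - g_{p-1,k+1}$ is right (peeling $D_{p-1}$, the factor that acts first, is legitimate, and commutativity makes the order irrelevant anyway); the closed form with rising factorials survives the Pascal-rule induction with the absorption $(k+p-1)(k+i)^{\overline{p-1-i}} = (k+i)^{\overline{p-i}}$; and your endpoint analysis is exactly what is needed, since $N=0$ is killed by $0^{\overline{n}} = 0$ and $N = n$ leaves the residue $-1$ that cancels the leading $u^n$. The structural difference from the paper is \emph{when} the hypothesis is used. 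The paper refuses to use $u' = \lambda u$ until the end: it first expands $f_{n,\lambda}(u)$ formally into differential products $\pi$ with integer coefficients $C_\pi$ (Theorem \ref{expanded-form}), which requires the product-rule coefficients $P_{\beta,\pi}$; only then does the hypothesis collapse every $\pi \in \Pi_{j,\alpha}$ to $\lambda^\alpha u^j$, reducing the theorem to the vanishing of the aggregated coefficients $C_j^*$ (Lemma \ref{C*j=0}), whose proof needs the weight machinery of Lemma \ref{weight-as-sum}, the factorial identity of Lemma \ref{product-sum-factorial}, and the negative-binomial convolution $\sum_k \binom{n}{k}\binom{-n}{m-k} = 0$ from Proposition \ref{negative-binomial} and Equation \eqref{coeffs-of-one}. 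You apply $u' = \lambda u$ at the outset, so each $D_m$ preserves $\mathbb{C}[\lambda][u]$ and the entire computation stays inside ordinary polynomials in $u$; your terminal identity is merely the alternating sum $(1-1)^N = 0$ after the subset identity $\binom{n}{k}\binom{n-k}{N-k} = \binom{n}{N}\binom{N}{k}$. What you buy is brevity and self-containedness: no differential words, densities, or weights, and (like the paper's combinatorial proof, unlike Kuchment--Lvin's original) no separate treatment of $\lambda = 0$. What the paper buys with its heavier route is reusability: the hypothesis-free expansion and its coefficients $C_\pi$ are precisely what Section 4 needs to define the linear part $f^L_{n,\lambda}$ and prove Theorem \ref{thm5}, where no relation $u' = \lambda u$ is assumed --- your early collapse of all differential products into powers of $u$ would erase the structure that analysis depends on.
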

	\begin{theorem}[The second Kuchment-Lvin identity]\label{kl2}If $u=u(x)$ is a smooth function and $\lambda$ a complex number such that $u'' = \lambda^2 u$, then  $f_{n,\lambda}(u) = 0$ for all odd $n$.\end{theorem}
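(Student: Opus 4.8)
The plan is to collect every $f_{n,\lambda}(u)$ into one exponential generating function in a bookkeeping variable $t$ and then read off the vanishing of the odd-indexed identities from a parity in $t$. Write $X = \partial - u$ and $P_N = \prod_{m=0}^{N-1}(\partial - u + m\lambda)$, so that $f_{n,\lambda}(u) = \sum_{k=0}^{n}\binom{n}{k}P_{n-k}u^{k}$, the leading $u^n$ appearing as the $k = n$ term with $P_0 = I$. The factors of $P_N$ pairwise commute, since they differ by scalars, so $P_N = \prod_{m=0}^{N-1}(X + m\lambda)$ is exactly the rising factorial of the single operator $X$ with increment $\lambda$. Because the $n$-sum is then a binomial convolution and $\sum_{k\ge 0}\frac{t^k}{k!}u^k = e^{tu}$, I would first establish \[ F(t) := \sum_{n\ge 0} f_{n,\lambda}(u)\,\frac{t^n}{n!} = \mathcal{P}(t)\bigl[e^{tu}\bigr], \qquad \mathcal{P}(t) := \sum_{N\ge 0}\frac{t^N}{N!}\,P_N. \]

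Next I would evaluate $\mathcal{P}(t)$ in closed form. The scalar generating function of the rising factorial, $\sum_{N\ge 0}\frac{y^{\overline{N}}}{N!}z^{N} = (1-z)^{-y}$, applied with $z = \lambda t$ and $y = X/\lambda$ (after writing $P_N = \lambda^{N}(X/\lambda)^{\overline{N}}$), gives $\mathcal{P}(t) = (1-\lambda t)^{-X/\lambda}$ as a formal power series in $t$; this is legitimate precisely because the $P_N$ commute. Using the conjugation $X = \partial - u = e^{U}\partial e^{-U}$, where $U' = u$, the operator factors through $\partial$, so $\mathcal{P}(t) = e^{U}(1-\lambda t)^{-\partial/\lambda}e^{-U} = e^{U}e^{s\partial}e^{-U}$ with $s = -\tfrac{1}{\lambda}\ln(1-\lambda t)$. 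Since $e^{s\partial}$ is translation by $s$, applying $\mathcal{P}(t)$ to $e^{tu}$ gives the closed form \[ F(t) = \exp\bigl(U(x) - U(x+s) + t\,u(x+s)\bigr). \] As a consistency check, if $u' = \lambda u$ then $U = u/\lambda$ and $u(x+s) = u(x)(1-\lambda t)^{-1}$, the exponent collapses to $0$, and $F(t)\equiv 1$, recovering Theorem~\ref{kl1}.

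To finish, I would specialize to $u'' = \lambda^2 u$. Every such smooth solution is analytic of the form $u = Ae^{\lambda x} + Be^{-\lambda x}$, so the shift $x\mapsto x+s$ is a genuine translation; using $e^{\lambda s} = (1-\lambda t)^{-1}$ and $e^{-\lambda s} = 1-\lambda t$, a short computation collapses the exponent to \[ U(x) - U(x+s) + t\,u(x+s) = -\lambda B e^{-\lambda x}\,t^{2}. \] This is even in $t$, so $F(t)$ is even, every odd-order coefficient vanishes, and $f_{n,\lambda}(u) = 0$ for all odd $n$. (The case $B = 0$ is again Theorem~\ref{kl1}, and the surviving even terms read off as $f_{2j,\lambda}(u) = \tfrac{(2j)!}{j!}(-\lambda B e^{-\lambda x})^{j}$.)

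I expect the main obstacle to be the operator bookkeeping in the first two paragraphs: recognizing the rising-factorial structure, justifying the generalized-binomial summation and the conjugation at the level of operators, and interpreting $e^{s\partial}$ correctly. For smooth $u$ this is handled by treating everything as a formal power series in $t$ and extracting a fixed coefficient of $t^n$, which involves only finitely many derivatives of $u$; for the specialization the automatic analyticity of solutions of $u'' = \lambda^2 u$ makes the translation literal and removes any analytic subtlety.
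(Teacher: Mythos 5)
Your proposal is correct in substance, but note that the paper itself contains no proof of Theorem \ref{kl2}: it is imported from \cite{KL} (Theorem 3 there), and the combinatorial machinery of Sections 2--3 (product rule coefficients, weights, the convolution identity) is applied only to Theorem \ref{kl1} and later to the linear part $f^L_{n,\lambda}$. So your argument is a genuinely different, self-contained route. I checked the key steps: the binomial convolution giving $F(t)=\mathcal{P}(t)\left[e^{tu}\right]$, the commutativity of the factors $\partial-u+m\lambda$, the binomial series $\sum_{N\ge 0}\binom{y+N-1}{N}z^N=(1-z)^{-y}$ (this is exactly the paper's Proposition \ref{negative-binomial} in generating-function form), and the conjugation $\partial-u=e^{U}\partial e^{-U}$ are all valid at the level of formal power series in $t$, since each coefficient of $t^n$ involves only finitely many derivatives of $u$. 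The exponent computation also checks out: for $u=Ae^{\lambda x}+Be^{-\lambda x}$, the $A$-terms in $U(x)-U(x+s)$ and $tu(x+s)$ cancel exactly, leaving $-\lambda Be^{-\lambda x}t^{2}$, which is even in $t$. As a sanity check, $f_{2,\lambda}(u)=u'-\lambda u=-2\lambda Be^{-\lambda x}$ matches your formula $f_{2j,\lambda}(u)=\tfrac{(2j)!}{j!}(-\lambda Be^{-\lambda x})^{j}$ at $j=1$. Your approach buys more than the stated theorem: one identity simultaneously recovers Theorem \ref{kl1} (the exponent vanishes identically when $u'=\lambda u$), proves Theorem \ref{kl2}, and shows the odd-index restriction is sharp, since the even-indexed polynomials vanish only when $B=0$, i.e.\ only when $u'=\lambda u$. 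That sharpness statement is in the spirit of the paper's Section 4 but holds for the full polynomial rather than its linear part.

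There is one genuine, though easily repaired, gap: Theorem \ref{kl2} allows $\lambda=0$, and your argument divides by $\lambda$ three times---in $P_N=\lambda^{N}(X/\lambda)^{\overline{N}}$, in $s=-\tfrac{1}{\lambda}\ln(1-\lambda t)$, and in the parametrization $u=Ae^{\lambda x}+Be^{-\lambda x}$, which is not the general solution of $u''=\lambda^{2}u$ when $\lambda=0$. (Compare the paper's remark before its proof of Theorem \ref{kl1}, that its combinatorial argument, unlike the original one in \cite{KL}, needs no separate $\lambda=0$ case.) The cleanest patch: observe that $s(t)=\sum_{j\ge 1}\lambda^{j-1}t^{j}/j$ has coefficients polynomial in $\lambda$, so the operator identity $\mathcal{P}(t)=e^{U}e^{s\partial}e^{-U}$, being a polynomial identity in $\lambda$ coefficientwise, holds for all $\lambda$ including $\lambda=0$, where $s=t$. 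Then for $u''=0$, i.e.\ $u=A+Bx$ and $U=Ax+Bx^{2}/2$, the exponent is $U(x)-U(x+t)+tu(x+t)=Bt^{2}/2$, again even in $t$, and the conclusion follows as before. With that two-line addition the proof is complete.
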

\noindent The authors then posed, and left unanswered, the following question:

\begin{question}\label{more-patterns}Does some other pattern occur? Let $m$ be an integer such that $m \ge 3$. If there exists a complex number $\lambda$ such that $u^{(m)} = \lambda^m u$, then does every $m$-th K-L polynomial vanish?\end{question}

\noindent We answer a closely related question in the negative using combinatorial techniques of differential algebra. Namely we prove the following theorem:

\begin{untheorem}[\ref{thm5}] Let $u$ be a smooth function, $\lambda$ a nonzero complex number, and $m$ an integer such that $m \ge 3$ and $u^{(m)} = \lambda^m u$. Let $n$ be an integer such that $n \ge 2$. If $f^L_{n,\lambda}(u) = 0$, then $u' = \lambda u$ or $u'' = \lambda^2 u$.\end{untheorem}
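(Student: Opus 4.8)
The plan is to diagonalize $f^L_{n,\lambda}$ against the exponential solutions of $u^{(m)} = \lambda^m u$. Since $\lambda \neq 0$, the characteristic polynomial $r^m - \lambda^m$ has the $m$ distinct roots $\zeta\lambda$ with $\zeta^m = 1$, so that solution space is spanned by the independent functions $e^{\zeta\lambda x}$, and I may write $u = \sum_{\zeta^m = 1} a_\zeta e^{\zeta\lambda x}$. Recall that $f^L_{n,\lambda}$ is the part of $f_{n,\lambda}$ linear in $u$, hence a linear differential operator $\sum_\alpha c_{n,\alpha}(\lambda)\,\partial^\alpha$; it acts on an exponential through its symbol, $f^L_{n,\lambda}(e^{\mu x}) = \sigma_n(\mu)e^{\mu x}$ with $\sigma_n(\mu) = \sum_\alpha c_{n,\alpha}(\lambda)\mu^\alpha$. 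By linear independence of the $e^{\zeta\lambda x}$, the hypothesis $f^L_{n,\lambda}(u) = 0$ is equivalent to $\sigma_n(\zeta\lambda) = 0$ for every $\zeta$ in the support $\{\zeta : a_\zeta \neq 0\}$. The theorem thus reduces to a root count: which $m$-th roots of unity can be zeros of $\sigma_n$?

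First I would compute $\sigma_n$. Reading Definition~\ref{K-L-poly}, the operators $\partial$ and multiplication by $m\lambda$ preserve the degree of a monomial in $u, u', u'', \dots$, whereas multiplication by $-u$ raises it by one, so $\prod_{m=0}^{n-k-1}(\partial - u + m\lambda)\,u^k$ contains no term of degree below $k$. Hence the degree-one part $f^L_{n,\lambda}$ receives contributions only from $k = 0$ and $k = 1$. Bookkeeping the degree-zero and degree-one pieces through the product (before a $-u$ only the scalar factors $m\lambda$ may act, since $\partial$ annihilates constants; afterward the factors act as $\partial + i\lambda$) yields, with $Q_j := \prod_{i=0}^{j-1}(\partial + i\lambda)$,
\[ f^L_{n,\lambda}(u) = n\,Q_{n-1}u - (n-1)!\sum_{j=0}^{n-1}\frac{\lambda^{\,n-1-j}}{j!}\,Q_j u. \]
Passing to symbols and normalizing $t = \mu/\lambda$, the factor $Q_j$ becomes $\lambda^j\prod_{i=0}^{j-1}(t+i)$, so $\sigma_n(\mu) = 0$ reads as a relation among the rising products $\prod_{i=0}^{j-1}(t+i)$.

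The combinatorial crux is to collapse that relation. Since $\frac{1}{j!}\prod_{i=0}^{j-1}(t+i) = \binom{t+j-1}{j}$, the hockey-stick identity $\sum_{j=0}^{n-1}\binom{t+j-1}{j} = \binom{t+n-1}{n-1}$ turns $\sigma_n(\mu) = 0$ into
\[ n\prod_{i=0}^{n-2}(t+i) = \prod_{i=1}^{n-1}(t+i). \]
Both sides carry the common factor $\prod_{i=1}^{n-2}(t+i)$; on the locus where it is nonzero, cancelling it leaves $nt = t + n - 1$, i.e.\ $t = 1$ because $n \geq 2$, while on the locus $\prod_{i=1}^{n-2}(t+i) = 0$ both sides vanish identically. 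The complete zero set of $\sigma_n$ is therefore $\{\mu : \mu/\lambda \in \{1, -1, -2, \dots, -(n-2)\}\}$.

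Finally I would intersect with the unit circle. Every root of unity $\zeta$ has $|\zeta| = 1$, and of the values $1, -1, -2, \dots, -(n-2)$ only $1$ and $-1$ lie on the unit circle. Thus $f^L_{n,\lambda}(u) = 0$ forces the support of $u$ into $\{1, -1\}$, so $u = a_1 e^{\lambda x} + a_{-1}e^{-\lambda x}$: if $a_{-1} = 0$ then $u' = \lambda u$, and otherwise $u'' = \lambda^2 u$ (note that when $m$ is odd, $-1$ is not an $m$-th root of unity, so $a_{-1} = 0$ automatically). I expect the genuine obstacle to be the second step, namely extracting the linear part cleanly and recognizing the telescoping that compresses the symbol into the single ratio $\prod_{i=1}^{n-1}(t+i)/\prod_{i=0}^{n-2}(t+i) = (t+n-1)/t$. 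Once that identity is in hand, the modulus comparison on the unit circle that isolates $\{1, -1\}$ is immediate.
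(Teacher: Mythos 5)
Your proposal is correct, and its skeleton is the same as the paper's: expand $u$ in the exponential basis furnished by $u^{(m)}=\lambda^m u$, use linear independence of the exponentials to reduce $f^L_{n,\lambda}(u)=0$ to the condition that each $\zeta$ in the support of $u$ is a root of the symbol of the linear part, and then check that only $\pm1$ among that symbol's roots can be roots of unity. Where you genuinely differ is in how the symbol is computed and factored. The paper counts coefficients directly (Lemma \ref{Calpha}: $C_\alpha = nS(n-2,n-1-\alpha)-S(n-1,n-1-\alpha)$) and factors the generating polynomial through $h_{n-1}(z)=ng_{n-2}(z)-g_{n-1}(z)=(n-1)(1-z)\prod_{m=1}^{n-2}(1+mz)$ (Lemma \ref{h(z)}), concluding via the rationality of its roots (Corollary \ref{h(z)-roots}). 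You instead keep the sum over the position $j$ of the unique $-u$ action as a sum of rising-factorial operators $Q_j$ and collapse it with the hockey-stick identity $\sum_{j=0}^{n-1}\binom{t+j-1}{j}=\binom{t+n-1}{n-1}$, landing directly on the factored operator $(n-1)(\partial-\lambda)\prod_{a=1}^{n-2}(\partial+a\lambda)$; that is precisely the paper's Lemma \ref{decomp}, which the paper states and proves only \emph{after} Theorem \ref{thm5} and does not use in its proof, so in effect you prove the decomposition first and read the theorem off of it. The underlying combinatorics is the same: the paper's subset count silently absorbs your hockey stick, via the bijection sending a scalar set $A\subseteq\{1,\dots,n-1\}$ to the largest index it omits, which is your $j$. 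What your arrangement buys is that the operator factorization becomes the centerpiece, so the paper's later description of the full solution space of $f^L_{n,\lambda}(u)=0$ comes for free, at the cost of invoking an explicit binomial identity that the paper's counting never needs; and your closing modulus argument (of $1,-1,-2,\dots,-(n-2)$ only $1$ and $-1$ lie on the unit circle) is marginally more direct than the paper's rationality argument. Both treatments handle $n=2$ (empty product) and odd $m$ identically.
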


\noindent Here $f^L_{n,\lambda}(u)$ is the sum of terms in $f_{n,\lambda}(u)$ which are degree $1$ in $u$ and its derivatives, called the linear part of $f_{n,\lambda}$. The above theorem says that for nonzero $\lambda$ and the linear part of the polynomials, the answer to Question \ref{more-patterns} is ``no'': all patterns that could be found are the patterns in Theorem \ref{kl1} and Theorem \ref{kl2}.

\begin{non-example}
Put $u = \sin x$. According to Theorem \ref{kl2}, $f_{3,i}(u) = 0$ since 3 is odd and $u'' = (i)^2u$. Let $\lambda = 1$. One may wonder if this and Theorem \ref{thm5} imply the contradiction $u' = \lambda u$ or $u^2 = \lambda^2 u$ because $u^{(4)} = \lambda^4 u$ and $u$ is a root of one of the K-L polynomials. This would not be correct, because the $\lambda$ in the equations $f_{n,\lambda}(u) = 0$ and $u^{(m)}=\lambda^m u$ must be the same number, and $1 \neq i$. 
\end{non-example}

\noindent One should also note that Theorem \ref{thm5} is neither a necessary nor sufficient statement towards answering Question \ref{more-patterns}. The Lindemann-Weierstra{\ss} Theorem \cite{Lang} states that if $a_1, \ldots, a_n$ are linearly independent over $\mathbb{Q}$, then $e^{a_1}, \ldots, e^{a_n}$ are algebraically independent over $\mathbb{Q}$. We would be able to use Theorem \ref{thm5} to answer Question \ref{more-patterns}, except that our set $a_1, \ldots, a_n$ are all the $n$-th roots of unity and therefore aren't linearly independent over $\mathbb{Q}$. 

This paper is organized as follows. In Section 2, we present definitions and lemmas that stand apart from the study of the K-L polynomials, but are necessary to the proofs given in later sections. Particularly, we discuss coefficients that arise in differentiating a function $u$ raised to some power (see Section 2.1), sums of products of integers (see Section 2.2), and a convolution involving a generalized binomial coefficient (see Section 2.3). In Section 3, we expand the definition of the K-L polynomials in Theorem \ref{expanded-form} and provide an alternate proof of the first Kuchment-Lvin identity by doing combinatorics on their coefficients. Having demonstrated the utility of the combinatorial approach, we apply those techniques in Section 4 to prove Theorem \ref{thm5} given above. To conclude, we explain the problems that arise in extending our analysis from the linear part to the whole polynomial and outline possible directions that future work on the K-L polynomials may take.

\section{Preliminaries}

\label{ch:two}

\noindent In Theorem \ref{expanded-form}, we express the Kuchment-Lvin polynomials (Definition \ref{K-L-poly}) as a linear combination of powers of a complex number $\lambda$ attached to products of derivatives of a smooth function $u$. Studying the K-L polynomials using combinatorics will require us to look at coefficients that arise from the product rule of differentiation as well as those arising from adding together certain products of integers.

\subsection{Product rule coefficients}

\noindent The definitions that follow depend on the choice of a smooth function $u$; however, we will be treating these symbols formally, so we will suppress $u$ in much of the following notation. 

\begin{definition}\label{pi}Let $j$ and $\alpha$ be integers with $j \ge 1$ and $\alpha \ge 0$. A {\it differential product} $\pi$ with {\it degree} $j$ and {\it order} $\alpha$ is a product
$$ \pi = \pi_u = \prod_{m=1}^j u^{(\alpha_m)} $$
such that $\alpha_1, \ldots, \alpha_j$ are non-negative integers with $\alpha_1 + \cdots + \alpha_j = \alpha$. The collection of all $\pi$ with degree $j$ and order $\alpha$ is denoted $\Pi_{j,\alpha}=\Pi_{j,\alpha}^u$.
\end{definition}

\noindent We study how a differential product $\pi$ can arise from differentiating powers of the function $u$. In particular, we are interested in coefficients of $\pi$ that arise from this process.

\begin{notation}Let $j$ and $\alpha$ be integers with $j$ positive. Put
$$ Z_{j,\alpha} = \setdesc{(\beta_1, \ldots, \beta_j)}{0 \le \beta_m  \in \mathbf{Z}, \; \beta_1 + \cdots + \beta_j = \alpha},$$ and notice that $Z_{j,\alpha} = \varnothing$ when $\alpha < 0$.
\end{notation}

\begin{definition} Let $\beta = (\beta_1, \ldots, \beta_j) \in Z_{j,\alpha}$ with $\alpha \ge 0$. Let $u(x)$ be a smooth function and $\partial = \frac{d}{dx}$, as above. We define the {\it differential word}, $w(\beta)$, of $\beta$ with respect to $u$ inductively, as follows:\begin{itemize}
\item for $j = 1$, we let $w(\beta_1) := \partial^{\beta_1}u;$ and
\item for $j > 1$ we define $w(\beta_1, \ldots, \beta_j) := \partial^{\beta_j}(uw(\beta_1, \ldots, \beta_{j-1}))$.
\end{itemize}
Hence $w(\beta_1,\ldots,\beta_j) = \partial^{\beta_j}(u\partial^{\beta_{j-1}}(u\cdots\partial^{\beta_1}u)\cdots)$, which we will write as $$w(\beta) = \partial^{\beta_j}u\partial^{\beta_{j-1}}u\cdots\partial^{\beta_1}u,$$ for brevity.
\end{definition}


\begin{example} Let $\beta = (0,1,1,1)$. Then
$$ w(\beta) = \partial u\partial u\partial u^2 =  8u(u')^3 + 14u^2u'u'' + 2u^3u^{(3)},$$
as is easily verified.
\end{example}

\noindent In the example above, the fact that $\beta$ has a leading zero entry allows us to simplify the calculation. We will be classifying coefficients that arise from examples with the first $k$ entries of $\beta$ equal to $0$ for general integers $k$. With this in mind, we introduce the following notation:

\begin{notation}\label{integer-tuples}Let $j$ and $k$ be positive integers such that $k \le j$. Let $\alpha$ be a non-negative integer. Denote the set of all members of $Z_{j,\alpha}$ whose first $k-1$ entries are zero by
$$ Z_{j,\alpha,k} = \setdesc{\beta \in Z_{j,\alpha}}{\beta_1, \ldots, \beta_{k-1} = 0}.$$
\end{notation}
\noindent Notice we have that $Z_{j,\alpha,1} = Z_{j,\alpha}$, and that $\Pi_{j,\alpha}$ and $Z_{j,\alpha}$ are defined so that all monomials arising from $w(\beta)$ for $\beta \in Z_{j,\alpha}$ are elements of $\Pi_{j,\alpha}$.

\begin{definition} \label{product-rule-coeff} Let $j$ and $k$ be positive integers such that $k \le j$. Let $\alpha$ be a non-negative integer, let $\pi \in \Pi_{j,\alpha}$, and let $\beta \in Z_{j,\alpha,k}$. The {\it product rule coefficient} associated to $\pi$ with respect to $\beta$ is the number $P_{\beta,\pi}$ such that
$$ w(\beta) = \sum_{\pi \in \Pi_{j,\alpha}} P_{\beta,\pi} \pi.$$\end{definition}

\begin{example} In the previous example, notice that $\beta = (0,1,1,1) \in Z_{4,3,2}$ and
$$P_{\beta,u(u')^3} = 8,~~~P_{\beta,uu'u''} = 14,~~~\mbox{and}~~P_{B,u^3u^{(3)}} = 2.$$
\end{example}

\noindent The first K-L identity (Theorem \ref{kl1}) supposes $u$ satisfies the first-order linear differential equation $u' = \lambda u$ for a complex number $\lambda$. We now present definitions and lemmas that arise while studying $w(\beta)$ when $u' = \lambda u$.

\begin{lemma} \label{pi=lambda^alphau^j}Let $j \ge 1$ and $\alpha \ge 0$ be integers. Suppose there exists a complex number $\lambda$ such that $u' = \lambda u$. If $\pi \in \Pi_{j,\alpha}$, then $\pi = \lambda^{\alpha}u^j$.\end{lemma}

\begin{proof}First, observe that if $\alpha_m$ is a positive integer and $u' = \lambda u$, then successive differentiation gives $u^{(\alpha_m)} = \lambda^{\alpha_m}u$. Next, recall $\pi$ is a product of derivatives of $u$ whose orders $\alpha_m$ sum to $\alpha$. Then
$$ \pi = \prod_{m=1}^j u^{(\alpha_m)} = \prod_{m=1}^j \lambda^{\alpha_m}u
= \lambda^{\alpha_1 + \cdots + \alpha_j} \prod_{m=1}^j u = \lambda^\alpha u^j$$
as claimed.\end{proof}

\noindent The preceding lemma implies that if $u' = \lambda u$, then different differential products $\pi$ can be added together.

\begin{example}Let $\beta = (0,1,1,1)$. If $\lambda$ is a complex number and $u' = \lambda u$, then
\begin{align*} w(\beta) = \partial u\partial u\partial u^2 &= 8(u')^3 + 14u^2u'u'' + 2u^3 u^{(3)} \\
& = 8\lambda^3 u^3 + 14 \lambda^3 u^3 + 2 \lambda^3 u^3 = 24\lambda^3 u^3
\end{align*}\end{example}

\begin{definition}\label{density} Let $j$ and $k$ be positive integers such that $k \le j$ and let $\alpha$ be a non-negative integer. Let $\beta \in Z_{j,\alpha,k}$. The {\it density} of $\beta$ is
$$|\beta| = \sum_{\pi \in \Pi_{j,\alpha}} P_{\beta,\pi}.$$\end{definition}

\begin{lemma}Let $j$ be a positive integer, and $\alpha$ a non-negative integer and let $\beta \in Z_{j,\alpha}$. If there exists a complex number $\lambda$ such that $u' = \lambda u$, then
$$ w(\beta) = \left( \prod_{m=1}^j m^{\beta_m} \right) \lambda^\alpha u^j.$$\end{lemma}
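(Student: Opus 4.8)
The plan is to prove the formula by induction on the degree $j$, driven by the recursive definition $w(\beta_1,\ldots,\beta_j) = \partial^{\beta_j}(u\,w(\beta_1,\ldots,\beta_{j-1}))$. The engine of the argument is the observation that, under the hypothesis $u' = \lambda u$, every power $u^k$ is an eigenfunction of $\partial$: since $(u^k)' = k u^{k-1}u' = k\lambda u^k$, iterating gives $\partial^b(u^k) = (k\lambda)^b u^k = k^b\lambda^b u^k$ for every nonnegative integer $b$. (This is consistent with Lemma \ref{pi=lambda^alphau^j} applied to the differential products appearing in $\partial^b(u^k)$.) I would isolate this eigenfunction identity first, since it is the only analytic input and it converts all differentiation into bookkeeping of the scalar factors $k^b$ and $\lambda^b$.

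For the base case $j=1$, the definition gives $w(\beta_1) = \partial^{\beta_1}u = \lambda^{\beta_1}u$, which matches the claim because $\prod_{m=1}^1 m^{\beta_m} = 1^{\beta_1} = 1$, $\alpha = \beta_1$, and $u^j = u$. For the inductive step, suppose the formula holds in degree $j-1$ and set $\alpha' = \beta_1 + \cdots + \beta_{j-1} = \alpha - \beta_j$, so that $(\beta_1,\ldots,\beta_{j-1}) \in Z_{j-1,\alpha'}$. Applying the induction hypothesis and then multiplying by the extra factor of $u$ supplied by the recursion yields
$$ u\,w(\beta_1,\ldots,\beta_{j-1}) = u\left(\prod_{m=1}^{j-1} m^{\beta_m}\right)\lambda^{\alpha'}u^{j-1} = \left(\prod_{m=1}^{j-1} m^{\beta_m}\right)\lambda^{\alpha'}u^{j}. $$
Now applying $\partial^{\beta_j}$ and invoking the eigenfunction observation with $k=j$ and $b=\beta_j$ produces the factor $(j\lambda)^{\beta_j} = j^{\beta_j}\lambda^{\beta_j}$. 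Collecting constants via $\left(\prod_{m=1}^{j-1} m^{\beta_m}\right)j^{\beta_j} = \prod_{m=1}^{j} m^{\beta_m}$ and $\lambda^{\alpha'}\lambda^{\beta_j} = \lambda^{\alpha}$ gives exactly $\left(\prod_{m=1}^{j} m^{\beta_m}\right)\lambda^{\alpha}u^{j}$.

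The proof is short, so the only delicate point — and the place where the combinatorial exponent $\prod_{m} m^{\beta_m}$ is actually generated — is the index bookkeeping in the inductive step. After applying the induction hypothesis to the first $j-1$ entries one has a power $u^{j-1}$, but the extra $u$ from the recursion raises this to $u^j$ \emph{before} $\partial^{\beta_j}$ acts; consequently the newly created factor is $j^{\beta_j}$ rather than $(j-1)^{\beta_j}$. Confirming that this shift lands on the correct index is what forces the product to run all the way to $m=j$, and I expect it to be the main (though modest) obstacle to get right. As a byproduct, combining this result with Lemma \ref{pi=lambda^alphau^j} evaluates the density of Definition \ref{density} as $|\beta| = \prod_{m=1}^{j} m^{\beta_m}$.
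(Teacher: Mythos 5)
Your proof is correct and takes essentially the same route as the paper's: both arguments induct on the length of the word via the recursion $w(\beta_1,\ldots,\beta_j) = \partial^{\beta_j}\bigl(u\,w(\beta_1,\ldots,\beta_{j-1})\bigr)$, with the identity $\partial^{b}(u^k) = (k\lambda)^b u^k$ supplying the factor $j^{\beta_j}\lambda^{\beta_j}$ at each stage (the paper phrases this as an induction on $j-k$ over $Z_{j,\alpha,k}$, whose base case $j=k$ contains the same sub-induction on $\beta_j$ that your eigenfunction observation handles). If anything, your write-up makes explicit the inductive step that the paper compresses into ``the rest follows easily,'' including the key bookkeeping point that the extra factor of $u$ raises the power to $u^j$ \emph{before} $\partial^{\beta_j}$ acts.
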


\begin{proof} We will let $k$ be an integer with $1 \le k \le j$, and induct on $j-k$ while proving the theorem for $\beta \in Z_{j,\alpha,k}$. Since $Z_{j,\alpha,1} = Z_{j,\alpha}$ this will prove the lemma.

Let $j = k$ so that $\beta = (0, \ldots, 0, \beta_k)$ and $\beta_k = \beta_j = \alpha$. Then
$$ w(\beta) = \partial^{\beta_j} u^j.$$
To show $w(\beta) = j^{\beta_j} \lambda^\alpha u^j$, we must perform a second induction on $\beta_j$. We establish the base case:
$$ \partial^{\beta_j} = \partial u^j = ju^{j-1} u' = j\lambda u^j.$$
The rest follows easily by induction on $j - k$.
\end{proof}

\begin{cor}\label{density-as-product}Let $j$ be a positive integer, and $\alpha$ a non-negative integer,  and $\beta \in Z_{j,\alpha}$. Then
$$ |\beta| =  \prod_{m=1}^j m^{\beta_m}.$$\end{cor}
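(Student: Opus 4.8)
The plan is to read off this corollary directly from the preceding lemma, the definition of density (Definition~\ref{density}), and Lemma~\ref{pi=lambda^alphau^j}. The crucial point to keep in mind is that the product rule coefficients $P_{\beta,\pi}$ of Definition~\ref{product-rule-coeff}, and hence the density $|\beta| = \sum_{\pi \in \Pi_{j,\alpha}} P_{\beta,\pi}$, are defined purely formally: they record how the differential word $w(\beta)$ expands as a combination of differential products, with no reference to any hypothesis on $u$. Consequently $|\beta|$ is a fixed integer, and to compute it we are free to evaluate the formal identity $w(\beta) = \sum_{\pi} P_{\beta,\pi}\pi$ on any convenient smooth function.

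First I would impose the hypothesis $u' = \lambda u$ of Lemma~\ref{pi=lambda^alphau^j} for some nonzero complex number $\lambda$. That lemma then tells me every $\pi \in \Pi_{j,\alpha}$ collapses to the single monomial $\lambda^\alpha u^j$. Substituting into the defining expansion gives
$$ w(\beta) = \sum_{\pi \in \Pi_{j,\alpha}} P_{\beta,\pi}\,\pi = \left( \sum_{\pi \in \Pi_{j,\alpha}} P_{\beta,\pi} \right)\lambda^\alpha u^j = |\beta|\,\lambda^\alpha u^j, $$
where the last equality is just Definition~\ref{density}. On the other hand, the immediately preceding lemma evaluates the same word as $w(\beta) = \left(\prod_{m=1}^j m^{\beta_m}\right)\lambda^\alpha u^j$.

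To finish I would pick a witnessing function for which the common factor does not vanish, for instance $u = e^x$ with $\lambda = 1$, so that $\lambda^\alpha u^j = e^{jx}$ is nowhere zero. Cancelling this factor from the two expressions for $w(\beta)$ yields $|\beta| = \prod_{m=1}^j m^{\beta_m}$, and since $|\beta|$ does not depend on the choice of $u$ or $\lambda$, the identity holds as a combinatorial statement. There is no serious obstacle here; the only thing requiring care is the justification of the cancellation, which is why I would first reduce the problem to a formal identity in the $P_{\beta,\pi}$ and only then specialize to a concrete nonvanishing $u$, rather than attempting to cancel $\lambda^\alpha u^j$ for an arbitrary $u$.
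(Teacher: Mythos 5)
Your proposal is correct and takes essentially the same route as the paper: the corollary is exactly the comparison of the preceding lemma's formula $w(\beta) = \bigl(\prod_{m=1}^j m^{\beta_m}\bigr)\lambda^\alpha u^j$ with the expansion $w(\beta) = \sum_{\pi} P_{\beta,\pi}\pi = |\beta|\,\lambda^\alpha u^j$ that Lemma~\ref{pi=lambda^alphau^j} and Definition~\ref{density} give under $u' = \lambda u$. Your additional step of specializing to a nonvanishing witness such as $u = e^x$, $\lambda = 1$ simply makes explicit the cancellation that the paper leaves implicit.
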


\begin{example} The previous example showed that if $\beta = (0,1,1,1)$, then
$$ |\beta| = 2^1\cdot 3^1 \cdot 4^1 = 24.$$ Table \ref{density-table} gives all $\beta \in Z_{4,3,2}$ and their corresponding densities. The last line of the table is the sum of the densities, which is a key component in the study of the K-L polynomials:\end{example}

\begin{table}
\begin{center}
\begin{tabular}{l|r}
$\beta \in Z_{4,3,2}$ & $\displaystyle\sum_{\pi \in \Pi_{4,3}} P_{\beta,\pi}$\\
\hline
$(0,0,0,3)$ & 64 \\
$(0,0,1,2)$ & 48 \\
$(0,0,2,1)$ & 36 \\
$(0,0,3,0)$ & 27 \\
$(0,1,1,1)$ & 24 \\
$(0,1,2,0)$ & 18 \\
$(0,1,0,2)$ & 32 \\
$(0,2,1,0)$ & 12 \\
$(0,2,0,1)$ & 16 \\
$(0,3,0,0)$ & 8 \\
\hline
$W(4,3,2)$ & 285
\end{tabular}
\caption{\label{density-table}The sum of densities corresponding to $\beta \in Z_{4,3,2}$.}
\end{center}
\end{table}

\begin{definition}\label{weight} Let $k,j$, and $\alpha$ be integers with $j$ positive and $0\le k \le j$. The {\it weight} of $j$, $\alpha$, and $k$ is $$W(j,\alpha,k) = \begin{cases} \displaystyle\sum_{\beta \in Z_{j,\alpha, k}} |\beta| & k \ge 1, \alpha \ge 0 \\ 
W(j,\alpha,1) & k = 0, \alpha \ge 0 \\ 
0 & \alpha < 0. \end{cases}$$\end{definition}

\begin{remark} \label{weight-remark}The definition of $W(j,\alpha,k)$ represents the sum of all possible product rule coefficients $P_{\beta,\pi}$ arising from
$$ \partial^{\beta_j}u \cdots \partial^{\beta_k}u^k $$
for all possible choices of $\beta_k, \ldots, \beta_j$. If $k \ge 1$ and $\alpha \ge 0$, then
$$ W(j,\alpha,k) = \sum_{\beta \in Z_{j,\alpha,k}} \sum_{\pi \in \Pi_{j,\alpha}} P_{\beta,\pi} = \sum_{\beta \in Z_{j,\alpha,k}} |\beta| .$$
If we take $k=0$, then the definition of $Z_{j,\alpha,k}$ is undefined. However, the setting $W(j,\alpha,0) = W(j,\alpha,1)$ is consistent with the above interpretation: Suppose we instead defined $\beta$ to be a $(j+1)$-tuple with first entry $\beta_0$. Then we look at words of the form
$$ \partial^{\beta_j}u \cdots \partial^{\beta_1}u\partial^{\beta_0} = \partial^{\beta_j}u \cdots \partial^{\beta_1}u\partial^{\beta_0}1.$$
Thus nothing is lost by restricting the discussion to $j$-tuples $\beta$ and setting $W(j,\alpha,0) = W(j,\alpha,1)$. Furthermore, the proof of Lemma \ref{C*j=0} relies very critically on the fact that in Lemma \ref{weight-as-sum}, the index $m$ cannot be less than one.

Until this point, we have required $\alpha$ to be non-negative. In the proof of Lemma \ref{C*j=0}, we re-index a sum in such a way $\alpha$ may be negative. If $\alpha < 0$, then the set $Z_{j,\alpha,k}$ is empty; there are no tuples of non-negative integers whose entries sum to $\alpha$. In that case, the sum is empty and is defined to be zero.\end{remark}

\begin{example}Table \ref{density-table} gives that the sum of all the densities corresponding to $\beta \in Z_{4,3,2}$---{\it i.e.}, the weight of 4, 3, and 2---is 285. Using Corollary \ref{density-as-product} and Definition \ref{weight}, we will arrive at the same value for $W(4,3,2)$.
\begin{align*} W(4,3,2) & = \; \sum_{\beta \in Z_{4,3,2}} 2^{{\beta_2}} 3^{\beta_3} 4^{\beta_4} \\
& = \; \sum_{{\beta_2}=0}^3 2^{\beta_2} \sum_{{\beta_3} = 0}^{3-{\beta_2}} 3^{\beta_3} 4^{3 - {\beta_2} - {\beta_3}}.\end{align*}
Simplifying these finite geometric series, we have that
\begin{equation}  W(4,3,2)  = \; 4 \frac{4^{3+1}-2^{3+1}}{2} - 3 \frac{3^{3+1}-2^{3+1}}{1} = 285.
\end{equation}
The above line can be rewritten as follows:
$$ W(4,3,2) = \frac{4^2}{2!}\cdot 1 \cdot 4^3 - \frac{3^1}{1!}\cdot 3 \cdot 3^3 + \frac{2^0}{0!} \cdot 2 \cdot 2^3,$$
which takes into account the way that one would arrive at a linear combination of $4^3$, $3^3$, and $2^3$ when computing $W(4,3,2)$. Expanding $W(j,\alpha,k)$ in this way is computationally useful in the proof of Lemma \ref{C*j=0}. Remarkably, the coefficients on $4^\alpha$, $3^\alpha$, and $2^\alpha$ in these expansions do not change with $\alpha$; an alternative proof of this can be found using the generating function of partial fraction addition. Thanks to Julian Rosen for bringing this to our attention. A similar calculation shows that
$$ W(4,5,2) = 6069 =  \frac{4^2}{2!}\cdot 1 \cdot 4^5 - \frac{3^1}{1!}\cdot 3 \cdot 3^5 + \frac{2^0}{0!} \cdot 2 \cdot 2^5.$$
\end{example}

\begin{lemma}\label{weight-as-sum}Let $j$ and $k$ be positive integers such that $k \le j$. Let $\alpha$ be a non-negative integer. For $k \le m \le j$, there exist rational numbers $A_{m,j}$, independent of $\alpha$, such that
$$ W(j,\alpha,k) = \sum_{m=k}^j \frac{m^{m-k}}{(m-k)!} A_{m,j} m^\alpha.$$\end{lemma}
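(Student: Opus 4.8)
The plan is to recognize $W(j,\alpha,k)$ as the coefficient of $t^\alpha$ in an explicit rational generating function and to read the numbers $A_{m,j}$ off a partial fraction decomposition. By Corollary~\ref{density-as-product} together with Definition~\ref{weight}, and since every $\beta\in Z_{j,\alpha,k}$ has $\beta_1=\cdots=\beta_{k-1}=0$ (so the factors $1^{\beta_1}\cdots(k-1)^{\beta_{k-1}}$ are trivial), the weight is the sum of monomials
$$ W(j,\alpha,k) = \sum_{\substack{\beta_k+\cdots+\beta_j=\alpha\\ \beta_m\ge 0}} \prod_{m=k}^j m^{\beta_m}. $$
First I would form the generating function $G(t)=\sum_{\alpha\ge 0} W(j,\alpha,k)\,t^\alpha$. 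Because the constraint $\beta_k+\cdots+\beta_j=\alpha$ is precisely what a Cauchy product records, $G$ factors into geometric series,
$$ G(t)=\prod_{m=k}^j\ \sum_{\beta_m\ge 0}(mt)^{\beta_m}=\prod_{m=k}^j\frac{1}{1-mt}. $$
This reduces the entire lemma to understanding the Taylor coefficients of one rational function.

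Next, since the integers $k,k+1,\dots,j$ are distinct, $G(t)$ has simple poles at $t=1/m$ and admits a partial fraction decomposition $G(t)=\sum_{m=k}^j \frac{c_m}{1-mt}$. Multiplying by $(1-mt)$ and evaluating at $t=1/m$ isolates the residue
$$ c_m=\prod_{\substack{\ell=k\\ \ell\neq m}}^j\frac{m}{m-\ell}. $$
The key computation is to split this product over $\ell\neq m$ into the factors with $\ell<m$ and those with $\ell>m$: the former contribute $(m-k)!$, the latter $(-1)^{j-m}(j-m)!$, while the $m$'s in the numerator contribute $m^{j-k}$. This yields the closed form
$$ c_m=\frac{(-1)^{j-m}\,m^{\,j-k}}{(m-k)!\,(j-m)!}. $$

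Finally, expanding $\frac{1}{1-mt}=\sum_{\alpha\ge 0}m^\alpha t^\alpha$ and extracting the coefficient of $t^\alpha$ gives $W(j,\alpha,k)=\sum_{m=k}^j c_m\, m^\alpha$. Factoring $c_m=\frac{m^{m-k}}{(m-k)!}\cdot\frac{(-1)^{j-m}m^{\,j-m}}{(j-m)!}$ and setting $A_{m,j}=\frac{(-1)^{j-m}m^{\,j-m}}{(j-m)!}$ produces exactly the asserted expression, with $A_{m,j}$ visibly independent not only of $\alpha$ but also of $k$ (the $k$-dependence lives entirely in the prefactor $\frac{m^{m-k}}{(m-k)!}$ and in the range of summation). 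I expect the main obstacle to be bookkeeping rather than any conceptual step: one must track the signs and the two factorials in the residue carefully so that the answer lands in the precise shape $\frac{m^{m-k}}{(m-k)!}A_{m,j}m^\alpha$ required by the statement. As a sanity check, for $j=4$, $k=2$ one gets $c_2=2$, $c_3=-9$, $c_4=8$, and indeed $W(4,3,2)=2\cdot 8-9\cdot 27+8\cdot 64=285$, matching Table~\ref{density-table}. An alternative and entirely elementary route, more in keeping with the combinatorial flavor of the paper, avoids generating functions altogether: one verifies the recurrence $W(j,\alpha,k)=k\,W(j,\alpha-1,k)+W(j,\alpha,k+1)$ (splitting tuples according to whether $\beta_k=0$), with base cases $W(j,0,k)=1$ and $W(j,\alpha,j)=j^\alpha$, and then proves the formula by induction; there the labor shifts to checking that the proposed $A_{m,j}$ satisfy the induced recurrence, which is exactly where the same factorial identities reappear.
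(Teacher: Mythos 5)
Your proof is correct, and it takes a genuinely different route from the paper's. The paper argues by induction on $j-k$: it peels off the sum over $\beta_k$ to get $W(j,\alpha,k)=\sum_{\beta_k=0}^{\alpha}k^{\beta_k}W(j,\alpha-\beta_k,k+1)$, sums the resulting finite geometric series, and then \emph{defines} the coefficients recursively by $A_{k,k}=1$ and $A_{k,j}=-k\sum_{m=k+1}^{j}\frac{m^{m-k-1}}{(m-k)!}A_{m,j}$, so existence is established without ever producing a closed form. You instead encode all $\alpha$ simultaneously in the generating function $\sum_{\alpha\ge 0}W(j,\alpha,k)t^{\alpha}=\prod_{m=k}^{j}(1-mt)^{-1}$ and read the coefficients off a partial fraction decomposition, arriving at the explicit value $A_{m,j}=\frac{(-1)^{j-m}m^{j-m}}{(j-m)!}$; one can check these satisfy the paper's recursion, and your residues $c_2=2$, $c_3=-9$, $c_4=8$ reproduce exactly the paper's own expansion $W(4,3,2)=\frac{4^2}{2!}\cdot 1\cdot 4^3-\frac{3^1}{1!}\cdot 3\cdot 3^3+\frac{2^0}{0!}\cdot 2\cdot 2^3=285$ from Table \ref{density-table}. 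Your approach buys two things: an explicit formula rather than a recursion, and manifest independence of $A_{m,j}$ from $k$ as well as from $\alpha$ --- a fact that the notation $A_{m,j}$ tacitly requires and that, in the paper's proof, is visible only because the induction reuses the stage-$(k+1)$ coefficients at stage $k$. What the paper's argument buys is that it stays entirely within finite sums and elementary induction, never invoking rational functions or formal power series, which keeps it closer to the combinatorial machinery used elsewhere (e.g.\ in Lemma \ref{C*j=0}, which only needs existence and $\alpha$-independence). It is worth noting that the paper itself gestures at your route: the remark following Table \ref{density-table} credits Julian Rosen with the observation that the $\alpha$-independence of these coefficients can be seen via ``the generating function of partial fraction addition'' --- your proof is precisely that argument carried out in full.
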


\begin{proof}
Recall from Definition \ref{weight} that
$$ W(j,\alpha,k) = \sum_{\beta \in Z_{j,k}^\alpha} |\beta|.$$
By Corollary \ref{density-as-product},
$$ |\beta| =  \prod_{m=k}^j m^{\beta_m},$$
so
$$ W(j,\alpha,k) = \sum_{\beta \in Z_{j,\alpha,k}}  \prod_{m=k}^j m^{\beta_m}.$$
We can express this sum in the following way:
\begin{align*}W(j,\alpha,k) &= \sum_{\beta \in Z_{j,\alpha,k}}  \prod_{m=k}^j m^{\beta_m} \\
& = \sum_{\beta_k + \cdots + \beta_j = \alpha}  \prod_{m=k}^j m^{\beta_m} \\
&= \sum_{\beta_k = 0}^\alpha k^{\beta_k} \sum_{\beta_{k+1} + \cdots + \beta_j = \alpha-\beta_k}  \prod_{m=k+1}^j m^{\beta_m} \\
& = \sum_{\beta_k = 0}^\alpha k^{\beta_k} W(j,\alpha - \beta_k, k+1).
\end{align*}
This suggests induction on $j-k$ as the correct strategy. First, consider the case where $j=k$:
$$W(k,\alpha,k) = \sum_{\beta_k = \alpha} k^{\beta_k} = k^\alpha.$$
The base case is satisfied taking $A_{k,k} = 1$. Next, let $j > k$ and suppose the claim is true for $W(j, \alpha-\beta_k, k+1)$. Then
\begin{align*}W(j,\alpha,k) & = \sum_{\beta_k=0}^\alpha k^{\beta_k} W(j, \alpha-\beta_k, k+1) \\
& = \sum_{\beta_k=0}^\alpha k^{\beta_k} \sum_{m=k+1}^j \frac{m^{m-k-1}}{(m-k-1)!} A_{m,j} m^{\alpha - \beta_k} \\
& = \sum_{m=k+1}^j \frac{m^{m-k-1}}{(m-k-1)!} A_{m,j}\sum_{\beta_k=0}^\alpha k^{\beta_k}m^{\alpha-\beta_k} \\
&= \sum_{m=k+1}^j \frac{m^{m-k-1}}{(m-k-1)!} A_{m,j} m^\alpha \sum_{\beta_k=0}^\alpha \left( \frac{k}{m} \right)^{\beta_k}.\end{align*}
The sum
$$\sum_{\beta_k=0}^\alpha \left( \frac{k}{m} \right)^{\beta_k}$$
is a finite geometric series with value
$$ \sum_{\beta_k=0}^\alpha \left( \frac{k}{m} \right)^{\beta_k} = \frac{1 - (k/m)^{\alpha+1}}{1 - k/m} = \frac{m-k^{\alpha+1}/m^\alpha}{m-k}.$$
Therefore,
\begin{align*}& \sum_{m=k+1}^j \frac{m^{m-k-1}}{(m-k-1)!} A_{m,j} m^\alpha\frac{m-k^{\alpha+1}/m^\alpha}{m-k} \\ = & \sum_{m=k+1}^j \frac{m^{m-k-1}}{(m-k-1)!} A_{m,j} \frac{m^{\alpha+1} - k^{\alpha + 1}}{m-k} \\
= & -\sum_{m=k+1}^j \frac{m^{m-k-1}}{(m-k)!} A_{m,j} k^{\alpha+1} + \sum_{m=k+1}^j \frac{m^{m-k}}{(m-k)!} A_{m,j} m^{\alpha}.
\end{align*}
Define
$$ A_{k,j} = -k\sum_{m=k+1}^j \frac{m^{m-k-1}}{(m-k)!} A_{m,j}$$
so that
\begin{align*} W(j,\alpha,k) & = A_{k,j} k^\alpha + \sum_{m=k+1}^j \frac{m^{m-k}}{(m-k)!} A_{m,j} m^\alpha  \\
& = \frac{k^{k-k}}{(k-k)!} A_{k,j} k^\alpha + \sum_{m=k+1}^j \frac{m^{m-k}}{(m-k)!} A_{m,j} m^\alpha \\
& = \sum_{m=k}^j \frac{m^{m-k}}{(m-k)!} A_{m,j} m^\alpha \end{align*}
as needed.\end{proof}

\subsection{Sums of products of integers}

\noindent Let $n$ be a positive integer and $\alpha$ be a non-negative integer. Another key ingredient in the combinatorial proof for Kuchment and Lvin's first result is a sum of all products of $\alpha$ integers between 1 and $n$. 

\begin{definition}\label{product-sum}Let $n > 0$ and $\alpha$ be integers. An $\alpha$-{\it product of }$n$ is a product of $\alpha$ distinct integers between 1 and $n$. Let $A$ range over subsets of $\setlist{1,\ldots,n}$. The sum of all such products is given by
$$ S(n,\alpha) = \begin{cases}{\displaystyle\sum_{|A| = \alpha} \prod_{a \in A}} a & n \ge 1, 1 \le \alpha \le n\\
1 & \alpha = 0 \\
0 & \text{otherwise.} \end{cases}$$\end{definition}

\noindent These numbers can be easily understood when realized as the coefficients of the polynomial $g_n(z)$ defined below. 

\begin{lemma} \label{g(z)} The polynomial
$$ g_n(z) = \prod_{a=1}^n (1 + az)$$
is a generating function for the numbers $S(n,\alpha)$. In other words,
$$ g_n(z) = \sum_{\alpha=0}^n S(n,\alpha)z^\alpha.$$\end{lemma}

\begin{proof} Expand the product in the definition of $g_n(z)$ into a sum of powers of $z$. To obtain any $z^\alpha$ in the expansion of this product, we must multiply together $\alpha$ distinct integers $a$ such that $1 \le a \le n$. The coefficient on $z^\alpha$ will be the sum of all these products, which by definition is exactly $S(n,\alpha)$.\end{proof}

\begin{remark} Note additionally that the only constant term is obtained by multiplying $1$ by itself $n$ times; $S(n,0) = 1$. This explains the definition for $S(n,\alpha)$.\end{remark}

\begin{lemma}\label{product-sum-factorial}Let $n$ be a positive integer and $\alpha$ a non-negative integer. If $1 \le m \le n$, then
\begin{equation}\sum_{\alpha = 0}^n m^{n-\alpha+1}S(n,\alpha) = \frac{(n+m)!}{(m-1)!}.\end{equation}\end{lemma}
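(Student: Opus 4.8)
The plan is to recognize the left-hand side as a single evaluation of the generating function $g_n$ from Lemma \ref{g(z)}, after pulling out an appropriate power of $m$. Since $m \ge 1$, the quantity $1/m$ is well-defined, and the key observation is that $\sum_{\alpha=0}^n S(n,\alpha) m^{-\alpha}$ is precisely $g_n(1/m)$. This reduces the whole identity to the product formula for $g_n$, so essentially no combinatorial bookkeeping is needed.

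First I would factor $m^{n+1}$ out of each term, writing
$$ \sum_{\alpha=0}^n m^{n-\alpha+1} S(n,\alpha) = m^{n+1} \sum_{\alpha=0}^n S(n,\alpha)\left(\frac{1}{m}\right)^\alpha = m^{n+1} g_n\!\left(\frac{1}{m}\right), $$
where the last equality is exactly Lemma \ref{g(z)} with $z = 1/m$.

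Next I would invoke the product form $g_n(z) = \prod_{a=1}^n (1 + az)$ and substitute $z = 1/m$. Each factor becomes $1 + a/m = (m+a)/m$, so the product telescopes into a ratio of factorials:
$$ g_n\!\left(\frac{1}{m}\right) = \prod_{a=1}^n \frac{m+a}{m} = \frac{1}{m^n}\prod_{a=1}^n (m+a) = \frac{1}{m^n}\cdot\frac{(m+n)!}{m!}, $$
using that $(m+1)(m+2)\cdots(m+n) = (m+n)!/m!$.

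Finally, multiplying by $m^{n+1}$ collapses the powers of $m$, giving $m\cdot (m+n)!/m! = (m+n)!/(m-1)!$, as claimed. I do not anticipate any real obstacle: the only step that deserves a sentence is the identification $\prod_{a=1}^n (m+a) = (m+n)!/m!$, and the argument works uniformly for every $m$ in the range $1 \le m \le n$ (indeed for any $m \ge 1$) precisely because $m \neq 0$ makes the substitution $z = 1/m$ legitimate.
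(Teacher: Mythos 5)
Your proposal is correct and follows essentially the same route as the paper: both recognize the sum as $m^{n+1}g_n\!\left(m^{-1}\right)$ via Lemma \ref{g(z)}, substitute into the product form of $g_n$, and simplify the resulting product $m(m+1)\cdots(m+n)$ to $\frac{(n+m)!}{(m-1)!}$. The only difference is cosmetic bookkeeping in where the powers of $m$ are absorbed.
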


\begin{proof}Notice that
$$ m^{n+1}g_n\left(m^{-1}\right) = m^{n+1} \sum_{\alpha=0}^n m^{-\alpha} S(n,\alpha) = \sum_{\alpha=0}^n m^{n-\alpha+1} S(n,\alpha).$$
Using the product form of $g_n(z)$ we have
$$ m^{n+1}g_n\left(m^{-1}\right) = m^{n+1} \prod_{a=1}^n \left( 1 + \frac{a}{m} \right).$$
We can multiply each factor of the product by one of the copies of $m$ to get
$$ m \prod_{a=1}^n (m + a)$$
which is exactly
$$ m(m+1)(m+2) \cdots (m+n) = \frac{(n+m)!}{(m-1)!}$$
as claimed.
\end{proof}

\subsection{A convolution identity}

\noindent We prove one last proposition before addressing the K-L polynomials.

\begin{definition}Let $\gamma$ be a complex number and $q$ be a non-negative integer. The {\it generalized binomial coefficient} of $\gamma$ and $q$ is
$$ {{\gamma}\choose{q}} = \frac{(\gamma)(\gamma-1)\cdots (\gamma - q + 1)}{q!}.$$\end{definition}

\begin{prop} \label{negative-binomial}Let $n$, $m$, and $k$ be integers such that $n$ and $m-k$ are non-negative. Then
$$ {{-n}\choose{m-k}} = (-1)^{m-k}{{n+m-k-1}\choose{m-k}}.$$\end{prop}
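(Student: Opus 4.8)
The plan is to prove this by unwinding the definition of the generalized binomial coefficient and factoring a sign out of each term in the numerator. Write $q = m-k$, which is non-negative by hypothesis, so that the claim becomes $\binom{-n}{q} = (-1)^q \binom{n+q-1}{q}$. When $q = 0$ both sides equal $1$, the numerator being an empty product, so I may assume $q \ge 1$ for the main computation.

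First I would expand the left-hand side directly from the definition:
$$ \binom{-n}{q} = \frac{(-n)(-n-1)\cdots(-n-q+1)}{q!}. $$
The numerator is a product of exactly $q$ factors, the $i$-th being $-(n+i)$ for $i = 0, \ldots, q-1$. Pulling a factor of $-1$ out of each yields
$$ (-n)(-n-1)\cdots(-n-q+1) = (-1)^q\, n(n+1)\cdots(n+q-1), $$
so that $\binom{-n}{q} = (-1)^q\, \dfrac{n(n+1)\cdots(n+q-1)}{q!}$.

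Next I would identify the remaining product with the right-hand binomial coefficient. Expanding $\binom{n+q-1}{q}$ from the definition gives
$$ \binom{n+q-1}{q} = \frac{(n+q-1)(n+q-2)\cdots n}{q!}, $$
whose numerator is the same product of $q$ consecutive integers running from $n$ to $n+q-1$, merely listed in the opposite order. Equating the two expressions delivers the claimed identity.

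The computation is routine, so I do not expect a serious obstacle; the only points requiring care are verifying that the numerator on each side contains exactly $q$ factors, so that the collected sign is precisely $(-1)^q$, and handling the degenerate case $q = 0$ separately, since the defining product is then empty and the factoring-out-of-signs step does not literally apply.
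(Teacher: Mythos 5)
Your proof is correct and follows essentially the same route as the paper's: expand $\binom{-n}{m-k}$ from the definition, factor $-1$ out of each of the $m-k$ factors in the numerator, and recognize the resulting product as the numerator of $\binom{n+m-k-1}{m-k}$. Your explicit treatment of the empty-product case $q=0$ is a small point of extra care that the paper's proof leaves implicit.
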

\noindent We thank David Bradley for pointing us towards the generalized binomial coefficient and its use in noticing the coefficients of the constant function in Equation \eqref{coeffs-of-one}.

\begin{proof}By the definition of the generalized binomial coefficient,
$$ {{-n}\choose{m-k}} = \frac{(-n)(-n-1)\cdots (-n-(m-k-1))}{(m-k)!}.$$
We may take $-1$ out of each of the $m-k$ factors in the numerator to get
$$ {{-n}\choose{m-k}} = (-1)^{m-k}\frac{(n)(n+1)\cdots (n+m-k-1)}{(m-k)!} = (-1)^{m-k} {{n+m-k-1}\choose{m-k}}$$
as desired.\end{proof}

\noindent The remainder of this paper will use the more general combinatorial results from this chapter to study the Kuchment-Lvin polynomials defined in the introduction. We begin by expressing the K-L polynomials as linear combinations of differential products of their argument $u$ and providing an alternate proof of the first Kuchment-Lvin identity.

\section{Combinatorial proof of the first K-L identity}
\label{ch:three}

\noindent In all that follows let $\lambda$ be a complex number, $n$ be a positive integer, and $u=u(x)$ be a smooth function. In this chapter we provide an alternate proof of the first Kuchment-Lvin identity. We note that unlike the original proof, we do not need to deal with the case of $\lambda = 0$ separately.

\begin{untheorem}[\ref{kl1}] If $u$ is a smooth function of $x$ and $\lambda$ is a complex number such that $u' = \lambda u$, then $f_{n,\lambda}(u) = 0$ for all $n \ge 1$. \end{untheorem}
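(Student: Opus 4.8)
The plan is to expand $f_{n,\lambda}(u)$ into the language of differential words developed in Section~2, evaluate each word under the hypothesis $u' = \lambda u$ using the density formula, and then show the resulting scalar polynomial in $\lambda$ vanishes identically by exhibiting a combinatorial cancellation. First I would examine the product $\prod_{m=0}^{n-k-1}(\partial - u + m\lambda)$ applied to $u^k$. Because $u' = \lambda u$ forces every differential product $\pi \in \Pi_{j,\alpha}$ to collapse to $\lambda^\alpha u^j$ by Lemma~\ref{pi=lambda^alphau^j}, each term of $f_{n,\lambda}(u)$ becomes a scalar multiple of some power $u^j$ times a power of $\lambda$. The key bookkeeping device is that applying $\partial$ to $u^k$ and tracking the resulting words is governed precisely by the weights $W(j,\alpha,k)$ of Definition~\ref{weight}, whose closed form is furnished by Corollary~\ref{density-as-product} and Lemma~\ref{weight-as-sum}.

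Next I would carry out the reduction carefully. The operator $\partial - u + m\lambda$ acting on a power of $u$ has three effects: $\partial$ raises the differential order (generating words), multiplication by $-u$ raises the degree $j$ in $u$, and $+m\lambda$ contributes a scalar. Under $u' = \lambda u$ every application of $\partial$ to $u^k$ can be replaced, via the density computation, by multiplication by $k\lambda$ (since $\partial u^k = k\lambda u^k$), so the noncommutative product of operators turns into an ordinary product of scalars $\prod_{m=0}^{n-k-1}(k\lambda - \lambda + m\lambda) = \lambda^{n-k}\prod_{m=0}^{n-k-1}(k - 1 + m)$. I expect the $k$-th summand of $f_{n,\lambda}(u)$ to reduce to $\binom{n}{k}\lambda^{n-k}\frac{(n-1)!}{(k-1)!}\,u^{?}$ type expressions, which should assemble into a recognizable factorial or binomial sum.

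With all terms expressed as explicit scalars times powers of $u$, the final step is to verify the total sum is zero. Here I would invoke the product-sum identities of Section~2.2, most likely Lemma~\ref{product-sum-factorial}, which packages exactly the kind of telescoping product $\prod(m + c)$ that arises above into a clean factorial ratio $\frac{(n+m)!}{(m-1)!}$. Reindexing the sum over $k$ and matching it against $S(n,\alpha)$ through the generating function $g_n(z)$ of Lemma~\ref{g(z)} should reveal that the coefficient of each power of $u$ (and of $\lambda$) cancels, leaving $f_{n,\lambda}(u) = 0$. The advantage of this route, as the authors note, is uniformity: since the scalar reduction $\partial u^k = k\lambda u^k$ holds even when $\lambda = 0$, no separate argument for the zero case is required.

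The main obstacle I anticipate is the noncommutativity of the operators $\partial - u + m\lambda$: the factors do not commute because $\partial$ does not commute with multiplication by $u$, so I cannot naively treat the product as a product of scalars before applying it to $u^k$. The honest calculation must either justify reading the product from the inside out (applying the innermost factor to $u^k$ first and tracking how $\partial$ interacts with the accumulated powers of $u$ at each stage) or must use the differential-word formalism to show that, after full expansion and reduction under $u' = \lambda u$, the order-dependence washes out. Establishing that the weights $W(j,\alpha,k)$ correctly enumerate the coefficients surviving this reduction—and that the index constraint $m \ge 1$ noted in Remark~\ref{weight-remark} is respected—is where the real care lies; the subsequent factorial cancellation should then be comparatively mechanical.
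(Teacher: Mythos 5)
There is a genuine gap at the computational core of your proposal. The claim that under $u' = \lambda u$ the operator product collapses to the scalar product $\prod_{m=0}^{n-k-1}(k\lambda - \lambda + m\lambda)$ is false. Applying $\partial - u + m\lambda$ to $u^j$ yields $(j+m)\lambda u^j - u^{j+1}$: the multiplication-by-$(-u)$ part raises the degree, so each subsequent $\partial$ acts partly on $u^{j+1}$ and contributes $(j+1)\lambda$ there, not $j\lambda$, and there is no justification at all for the substitution $-u \mapsto -\lambda$. The $k$-th summand is therefore a polynomial in $u$ spanning degrees $k$ through $n$, not a scalar multiple of $u^k$. You can see the formula cannot be repaired by inspecting signs: $\prod_{m=0}^{n-k-1}(k-1+m)$ equals $(n-2)!/(k-2)! > 0$ for $k \ge 2$ and $0$ for $k \le 1$, so your reduction would give $f_{n,\lambda}(u) = u^n + \sum_{k \ge 2} \binom{n}{k}\lambda^{n-k}\frac{(n-2)!}{(k-2)!}\,u^k$, a sum of positive multiples of powers of $u$ (linearly independent exponentials when $u = Ce^{\lambda x}$), which is not zero. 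What your reduction has lost is precisely the alternating sign $(-1)^{j-k}$ recording how many times $-u$ is actually applied, and those signs are indispensable.

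This is not a cosmetic omission, because the cancellation you anticipate (a telescoping factorial via Lemma~\ref{product-sum-factorial}) is not the mechanism that makes $f_{n,\lambda}(u)$ vanish. The paper's proof tracks the degree $j$ (number of $-u$ applications), the order $\alpha$ (number of differentiations), and the chosen $m\lambda$ factors (giving $S(n-k-1,n-j-\alpha)$), yielding Theorem~\ref{expanded-form}; after the collapse of Lemma~\ref{pi=lambda^alphau^j}, the coefficient of $\lambda^{n-j}u^j$ is $C_j^* = \sum_{k=0}^j (-1)^{j-k}\binom{n}{k}\sum_{\alpha} W(j,\alpha,k)\,S(n-k-1,n-j-\alpha)$ as in Notation~\ref{C*j}. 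Lemma~\ref{weight-as-sum} and Lemma~\ref{product-sum-factorial} only serve to rewrite this in factorials; the actual vanishing comes from the signed binomial convolution $\sum_{k=0}^m (-1)^{m-k}\binom{n}{k}\binom{n-1+m-k}{m-k} = \sum_{k=0}^m \binom{n}{k}\binom{-n}{m-k} = [z^m](1+z)^n(1+z)^{-n} = 0$ for $m \ge 1$ (Proposition~\ref{negative-binomial} and Lemma~\ref{C*j=0}), where the alternating signs are exactly the ones your scalar reduction discards. Your closing paragraph does name the noncommutativity obstacle honestly, but deferring to ``the differential-word formalism'' without identifying this sign-carrying identity defers the entire proof; as written, the middle of your argument computes a manifestly nonzero answer.
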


\subsection{The Kuchment-Lvin polynomials}

\noindent Recall Definition \ref{K-L-poly}, reproduced here:

\begin{undef}[\ref{K-L-poly}] Let $n$ be a positive integer, $\lambda$ be a complex number, $u = u(x)$ be a smooth function, and $\partial = \frac{d}{dx}$. The $n$-th {\it Kuchment-Lvin (K-L) polynomial} parametrized by $\lambda$ is defined to be
$$ f_{n,\lambda}(u)  = u^n + \sum_{k=0}^{n-1} {{n}\choose{k}} \left(\prod_{m=0}^{n-k-1} \left( \partial - u + m\lambda \right) \right) u^k.$$
\end{undef}

\noindent Note that with these particular differential operators, the product of operators behaves associatively and commutatively.

\begin{example}Compute the $k=1$ term of $f_{3,\lambda}(u)$,
$$ {{3}\choose{1}}(\partial-u)(\partial-u+\lambda)u.$$
First, $(\partial - u + \lambda)$ acts on $u$:
$$ (\partial-u+\lambda)u = u' - u^2 + \lambda u.$$
Next, $(\partial - u)$ acts on the result:
$$ (\partial - u)(u' - u^2 + \lambda u) = u'' - u'u - 2u'u + u^3 + \lambda u' - \lambda u^2.$$
Finally, we add and multiply by ${{3}\choose{1}} = 3$.
$$ 3u'' - 9uu' + 3u^3 + 3\lambda u' - 3 \lambda u^2.$$\end{example}

\begin{example} One can similarly compute that the $k=0$ term of $f_{3,\lambda}(u)$ is
\begin{equation*} 
 -u'' + 3uu' -u^3 - 3\lambda u' + 3\lambda u^2 - 2\lambda^2 u\end{equation*}
and that the $k=2$ term is
$$ {{3}\choose{2}}(\partial-u)u^2 = 6uu' - 3u^3.$$
We add these to $u^3$ to get that:
$$ f_{3,\lambda}(u) = u^3 + \sum_{k=0}^2 {{3}\choose{k}} \left( \prod_{m=0}^2 (\partial - u + m\lambda) \right)u^k = 2u'' - 2\lambda^2u.$$
Observe that the coefficients of $f_{3,\lambda}(u)$---by which we mean the integers attached to $\lambda$ times some product of derivatives of $u$ (see Definition \ref{coefficient})---sum to zero. In fact, if we assume that $u' = \lambda u$, then $u'' = \lambda^2u$. Also note that
$$ f_{3,\lambda}(u) = \lambda^2u(2-2) = 0,$$
which demonstrates the first and second K-L identities (see Theorems \ref{kl1} and \ref{kl2}, respectively).\end{example}

\noindent Recall from Definition \ref{pi} that $\Pi_{j,\alpha}$ is the set of all differential products
$$ \pi = \prod_{m=1}^j u^{(\alpha_m)} $$ of derivatives of $u$ with degree $j$ and order $\alpha = \alpha_1 + \cdots + \alpha_j$.
\begin{theorem}\label{expanded-form}Let $n$ be a positive integer, $\lambda$ be a complex number, and $f_{n,\lambda}(u)$ be a K-L polynomial. Then 
$$ f_{n,\lambda}(u) = \sum_{j=1}^n \sum_{\alpha = 0}^{n-j} \lambda^{n-j-\alpha} \sum_{\pi \in \Pi_{j,\alpha}} C_\pi \pi, \qquad C_\pi \in \mathbf{Z}.$$\end{theorem}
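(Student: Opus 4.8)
The plan is to track, for every term that arises as the operators $(\partial - u + m\lambda)$ are applied, three statistics at once: the degree $j$ and order $\alpha$ of the attached differential product $\pi \in \Pi_{j,\alpha}$, together with the exponent $p$ of $\lambda$. The whole theorem then reduces to a single bookkeeping invariant, namely that each application of an operator $(\partial - u + m\lambda)$ raises the quantity $j + \alpha + p$ by exactly $1$ on every term.

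First I would record the effect of each of the three summands of a single operator on a term $\lambda^p \pi$ with $\pi \in \Pi_{j,\alpha}$. Since $\lambda$ is constant, $\partial(\lambda^p \pi) = \lambda^p \partial\pi$, and by the product rule $\partial\pi$ is a sum, with nonnegative integer coefficients, of differential products of degree $j$ and order $\alpha+1$; the summand $-u$ sends $\lambda^p\pi$ to $-\lambda^p(u\pi)$ with $u\pi \in \Pi_{j+1,\alpha}$; and $m\lambda$ sends it to $m\,\lambda^{p+1}\pi$ with $\pi$ unchanged in $\Pi_{j,\alpha}$. Thus $(j,\alpha,p)$ moves to $(j,\alpha+1,p)$, $(j+1,\alpha,p)$, or $(j,\alpha,p+1)$, in every case increasing $j+\alpha+p$ by one.

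Next I would induct on the number of operators applied to the starting monomial $u^k \in \Pi_{k,0}$, which carries $p=0$ and hence initial value $j+\alpha+p = k$; the stray term $u^n$ likewise has value $n$. Because the $k$-th summand applies exactly $n-k$ operators, every term it produces satisfies $j+\alpha+p = k+(n-k) = n$, that is $p = n-j-\alpha$. This is precisely the exponent of $\lambda$ demanded in the statement, and regrouping terms by their $\pi$ while summing the integer coefficients yields the claimed triple sum with a well-defined $C_\pi$ for each $\pi$.

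Finally I would settle the index ranges and integrality. Nonnegativity $p \ge 0$ is automatic, as $\lambda$ enters only through multiplication by $+m\lambda$, and $p = n-j-\alpha \ge 0$ rearranges to $\alpha \le n-j$; orders are nonnegative, so $\alpha \ge 0$; and degree rises by at most one per operator, so $j \le n$. The one delicate point, and the step I expect to be the main obstacle, is the bound $j \ge 1$, i.e. that no purely scalar (degree-zero) term survives. For $k \ge 1$ the starting degree is already at least $1$ and never decreases, but for $k=0$ one begins at the constant $u^0 = 1$; here I would expand the product of operators into compositions of the elementary pieces $\partial$, $-u$, and $m\lambda$, then argue that a degree-zero contribution can use no $-u$ piece, that $\partial$ annihilates every scalar it meets, and that the only remaining candidate, the composition built entirely from $m\lambda$ pieces, contains the $m=0$ factor and hence vanishes. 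Integrality of each $C_\pi$ then follows by assembling the integer product-rule coefficients, the $-1$ from each $-u$, the integer $m$ from each $m\lambda$, and the binomial coefficient ${{n}\choose{k}}$.
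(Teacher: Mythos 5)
Your proposal is correct and follows essentially the same route as the paper: its Lemma \ref{kth-term-expansion} is exactly your bookkeeping (each operator either differentiates, multiplies by $-u$, or multiplies by $m\lambda$, which forces the $\lambda$-exponent to be $n-j-\alpha$), and its Lemma \ref{no-constants} is your argument that no degree-zero term survives the $k=0$ product, where the $m=0$ factor has no scalar part. The only substantive difference is that the paper's version of the counting records an explicit closed form for $C_\pi$ in terms of $S(n-k-1,n-j-\alpha)$, $\binom{n}{k}$, and the product-rule coefficients $P_{\beta,\pi}$ --- which is what the later lemmas actually use --- whereas your invariant $j+\alpha+p$ proves the stated theorem without exhibiting the coefficients.
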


\begin{definition}\label{coefficient}The integer $C_\pi$ in the above expression is called a {\it coefficient} of the K-L polynomial.\end{definition}

\noindent The proof of Theorem \ref{expanded-form} requires the following lemmas.

\begin{lemma}\label{kth-term-expansion} Let $k$ be an integer such that $0 \le k \le n-1$. Then
\begin{align*}
& {{n}\choose{k}} \left( \prod_{m=0}^{n-k-1} (\partial - u + m\lambda) \right) u^k \\
= & {{n}\choose{k}} \sum_{j=k}^n \sum_{\alpha =0}^{n-j} \lambda^{n-j-\alpha} S(n-k-1,n-j-\alpha) \sum_{\beta \in Z_{j,\alpha,k}} \sum_{\pi \in \Pi_{j,\alpha}} P_{\beta,\pi} \pi.  
\end{align*}\end{lemma}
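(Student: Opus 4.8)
The plan is to collapse the product of operators into powers of the single operator $D := \partial - u$ and then expand $D^p u^k$ combinatorially. First I would invoke the structural fact recorded after Definition \ref{K-L-poly}, that the factors $\partial - u + m\lambda$ commute, since each is $D$ shifted by a scalar. Writing $N = n-k-1$, I would expand $\prod_{m=0}^{N}(D + m\lambda)$ by selecting from each factor either $D$ or its scalar $m\lambda$. Collecting by the power $p$ of $D$, the coefficient of $D^p$ is $\lambda^{N+1-p}$ times the sum of $\prod_{m\in A} m$ over all subsets $A \subseteq \{0,\dots,N\}$ with $|A| = N+1-p$; any subset containing $0$ contributes nothing, so by Definition \ref{product-sum} this coefficient equals $\lambda^{N+1-p} S(N, N+1-p) = \lambda^{n-k-p}\,S(n-k-1,\,n-k-p)$. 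Hence
$$\left(\prod_{m=0}^{n-k-1}(\partial - u + m\lambda)\right)u^k = \sum_{p=0}^{n-k} \lambda^{n-k-p}\, S(n-k-1,\, n-k-p)\,(\partial - u)^p u^k.$$

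Next I would expand $(\partial - u)^p u^k$ as a sum over the $2^p$ ordered strings of choices, each position contributing either a differentiation $\partial$ or multiplication by $-u$. A string with $b$ multiplications and $a = p-b$ differentiations produces a differential product of degree $j = k+b$ and order $\alpha = a$. Reading the string from the innermost operation outward, the multiplications split the differentiations into $b+1$ consecutive blocks whose sizes form a tuple of nonnegative integers summing to $\alpha$; padding with $k-1$ leading zeros yields a unique $\beta \in Z_{j,\alpha,k}$ (Notation \ref{integer-tuples}), and this assignment is a bijection. If the multiplications were by $+u$, each string would reproduce exactly the differential word $w(\beta)$; since they are by $-u$, each term carries the sign $(-1)^b = (-1)^{j-k}$. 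Because the correspondence is a bijection, each $w(\beta)$ occurs with multiplicity one, so
$$(\partial - u)^p u^k = \sum_{\substack{k \le j,\ \alpha \ge 0 \\ (j-k)+\alpha = p}} (-1)^{j-k} \sum_{\beta \in Z_{j,\alpha,k}} w(\beta),$$
after which I would replace $w(\beta)$ by $\sum_{\pi \in \Pi_{j,\alpha}} P_{\beta,\pi}\,\pi$ using Definition \ref{product-rule-coeff} and the differential products of Definition \ref{pi}.

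Finally I would substitute this into the $S$-expansion and reindex. With $p = (j-k)+\alpha$, the exponent $n-k-p$ becomes $n-j-\alpha$ and the second argument of $S$ becomes $n-j-\alpha$, while the constraint $0 \le p \le n-k$ becomes $k \le j \le n$ with $0 \le \alpha \le n-j$, matching the ranges in the statement. Collapsing the sum over $p$ and then over compatible $(j,\alpha)$ into a single sum over $(j,\alpha)$, and multiplying through by $\binom{n}{k}$, delivers the claimed expression.

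The main obstacle is the middle step: establishing the bijection between the noncommutative expansion of $(\partial - u)^p u^k$ and the differential words indexed by $Z_{j,\alpha,k}$, and verifying that each $w(\beta)$ appears exactly once. Bundled into this is the sign bookkeeping — each of the $j-k$ factors of $-u$ contributes a minus sign, so the degree-$j$ contribution is weighted by $(-1)^{j-k}$, and this sign must be carried through the identity. Everything else, namely the geometric collection of the scalar factors into $S(n-k-1,\cdot)$ and the final reindexing, is routine once this correspondence is pinned down.
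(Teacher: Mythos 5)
Your proof is correct, and it arrives at the same expression the paper's own proof derives, but by a cleaner two-stage route. The paper expands the product in a single pass: each factor $\partial - u + m\lambda$ acts as $\partial$, as $-u$, or as the scalar $m\lambda$, and the scalar bookkeeping (the sum $S(n-k-1,n-j-\alpha)$, with the $m=0$ scalar annihilating its terms), the arrangement bookkeeping ($\beta \in Z_{j,\alpha,k}$), and the sign are collected simultaneously; the factorization of the sum over scalar positions from the sum over $\partial$/$(-u)$ arrangements is left implicit there. You instead first use commutativity to collapse the product into the operator polynomial $\sum_{p=0}^{n-k} \lambda^{n-k-p} S(n-k-1,n-k-p)\,(\partial-u)^p$, and only afterwards expand $(\partial-u)^p u^k$ over binary strings, with an explicit bijection between strings and tuples in $Z_{j,\alpha,k}$. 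This buys rigor precisely where the paper is informal — once the product is a polynomial in $D = \partial - u$, the separation of scalar choices from word choices is automatic — and it anticipates the operator-factorization viewpoint the paper adopts later in Lemma \ref{decomp}. The only cost is the extra reindexing $p = (j-k)+\alpha$, which you carry out correctly, including the range translation to $k \le j \le n$, $0 \le \alpha \le n-j$.

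One discrepancy is worth flagging, though it is not a flaw in your argument: your derivation produces the factor $(-1)^{j-k}$, which is absent from the lemma as stated. The paper's own proof derives exactly the same signed expression, and it is the signed version that is invoked in the proof of Theorem \ref{expanded-form}; the statement of Lemma \ref{kth-term-expansion} simply omits the sign by typographical error. Your version is the correct one.
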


\begin{proof}
Let $j$ and $\alpha$ be integers such that $0 \le j-k \le n-k$ and $0 \le \alpha \le n-j$. A term in the expansion of 
$$ \left( \prod_{m=0}^{n-k-1} (\partial - u + m\lambda) \right)u^k \label{kth-term} $$
arises from multiplying by $-u$ a total of $j-k$ times and differentiating $\alpha$ times. The remaining
$$ n-k - (j-k) - \alpha = n-j-\alpha $$
operators act as multiplication by $m\lambda$. We add over all possible choices of products of $n-j-\alpha$ integers from $\setlist{1, \ldots, n-k-1}$. Therefore, there exists a $\beta = (0,\ldots,0,\beta_k,\ldots,\beta_j) \in Z_{j,\alpha,k}$ (see Notation \ref{integer-tuples}) such that the result is
$$ (-1)^{j-k} \lambda^{n-j-\alpha} S(n-k-1,n-j-\alpha) \partial^{\beta_j} u \partial^{\beta_{j-1}} u \cdots \partial^{\beta_k} u^k$$
where $S(n-k-1,n-j-\alpha)$ is as defined in Definition \ref{product-sum}.
After expanding $\partial^{\beta_j} u \partial^{\beta_{j-1}} u \cdots \partial^{\beta_k} u^k$ as in Definition \ref{product-rule-coeff}, the above becomes
$$ (-1)^{j-k} \lambda^{n-j-\alpha} S(n-k-1,n-j-\alpha) \sum_{\pi \in \Pi_{j,\alpha}} P_{\beta,\pi} \pi.$$
Sum over all possible arrangements of $\partial$ and $u$---in other words, all $\beta \in Z_{j,\alpha,k}$---and all possible choices of $j$ and $\alpha$ to obtain
$$ \sum_{j=k}^n (-1)^{j-k} \sum_{\alpha=0}^{n-j} \lambda^{n-j-\alpha} S(n-k-1,n-j-\alpha) \sum_{\beta \in Z_{j,\alpha,k}} \sum_{\pi \in \Pi_{j,\alpha}} P_{\beta,\pi} \pi.$$
Multiplying by ${{n}\choose{k}}$ completes the proof.
\end{proof}

\begin{lemma}\label{no-constants} There are no constant terms in the expansion of $f_{n,\lambda}(u)$. \end{lemma}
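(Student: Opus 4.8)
The plan is to show that every monomial appearing in $f_{n,\lambda}(u)$ carries at least one factor of $u$ or of a derivative of $u$; in the language of Definition \ref{pi}, that no term has degree $0$. I will track how the \emph{degree} of a differential product (the number of factors $u^{(\alpha_m)}$) evolves under the three operations out of which $f_{n,\lambda}(u)$ is assembled from the powers $u^k$.

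The key observation is that each operator $(\partial - u + m\lambda)$ acts on degree in a controlled way. Multiplication by $-u$ raises the degree by exactly one; multiplication by the scalar $m\lambda$ preserves the degree; and $\partial$, being a derivation, sends a differential product of degree $j \ge 1$ to a sum of differential products of the same degree $j$ by the Leibniz rule, while it annihilates any constant. First I would dispose of the summands with $k \ge 1$. Here we start from $u^k$, which has degree $k \ge 1$, and since none of the three operations can lower the degree below its current value, every resulting monomial has degree $\ge k \ge 1$ and is therefore non-constant. The leading term $u^n$ is non-constant for the same reason, as $n \ge 1$.

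The only delicate case is $k = 0$, where we apply $\prod_{m=0}^{n-1}(\partial - u + m\lambda)$ to the constant function $1$. Since the operators commute, I would expand this product by choosing $\partial$, $-u$, or the scalar $m\lambda$ from each factor. To obtain a degree-$0$ monomial we must never select $-u$, as it raises the degree; and if we select $\partial$ from even one factor, the resulting term applies a positive power of $\partial$ to the constant $1$ and vanishes. Hence the only candidate is the term taking the scalar $m\lambda$ from every factor, namely $\prod_{m=0}^{n-1}(m\lambda)$, and this contains the factor $0\cdot\lambda$ arising from $m = 0$ and is therefore $0$. Combining this with the $k \ge 1$ case shows that $f_{n,\lambda}(u)$ has no constant term.

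I expect the main obstacle to be nothing deeper than careful bookkeeping around this distinguished $m = 0$ factor, together with a clean statement of the degree invariant, in particular the fact that $\partial$ preserves the degree of a nonconstant differential product but kills constants. As an alternative that avoids the case analysis entirely, I could instead invoke Lemma \ref{kth-term-expansion}: a constant can only come from the term with $j = 0$, which forces $\alpha = 0$, and whose coefficient is $S(n-1,n)$; since one cannot choose $n$ distinct integers from $\{1,\ldots,n-1\}$, Definition \ref{product-sum} gives $S(n-1,n) = 0$, so no constant term survives.
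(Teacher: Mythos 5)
Your proposal is correct and takes essentially the same approach as the paper: both arguments first reduce to the $k=0$ summand by observing that the operators $(\partial - u + m\lambda)$ never decrease degree, and then show no constant can survive from $\prod_{m=0}^{n-1}(\partial - u + m\lambda)1$ because producing one would force every factor to act as its scalar part, which fails on account of the $m=0$ factor (the paper phrases this as $(\partial-u)$ acting last on $(n-1)!\lambda^{n-1}$, you as the all-scalar word containing the factor $0\cdot\lambda$). Your closing alternative via $S(n-1,n)=0$ is the same observation in combinatorial disguise.
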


\begin{proof}
Observe that
$$ \prod_{m=0}^{n-k-1}(\partial - u + m\lambda) $$
does not decrease the degree of its argument. Therefore, a constant term may only arise from the $k=0$ term,
$$ (\partial - u)(\partial - u + \lambda) \cdots (\partial - u + (n-1)\lambda)1.$$
We seek a term of degree zero. The right-most $n-1$ operators must act as multiplication by $m\lambda$ because multiplication by $-u$ will increase the degree and differentiation will annihilate the constant. Therefore after the first $n-1$ operators we obtain
$$ (\partial -u)(n-1)!\lambda^{n-1},$$
which will either be annihilated by $\partial$ or become $-(n-1)!\lambda^{n-1}u$, a term of degree one.
\end{proof}

\begin{proof}[Proof of Theorem \ref{expanded-form}] 
Recall that
$$ f_{n,\lambda}(u) = u^n + \sum_{k=0}^{n-1} {{n}\choose{k}} \left( \prod_{m=0}^{n-k-1} (\partial - u + m\lambda) \right)u^k.$$
By Lemma \ref{kth-term-expansion},
$$ f_{n,\lambda}(u) = u^n + \sum_{k=0}^{n-1} {{n}\choose{k}} \sum_{j=k}^n (-1)^{j-k} \sum_{\alpha =0}^{n-j} \lambda^{n-j-\alpha} S(n-k-1,n-j-\alpha) \sum_{\beta \in Z_{j,\alpha,k}} \sum_{\pi \in \Pi_{j,\alpha}} P_{\beta,\pi} \pi.$$
Trivially,
$$ u^n = {{n}\choose{n}} \sum_{j=n}^n \sum_{\alpha=0}^{n-n} \lambda^{n-n-0} S(n-n-1,n-n-0) \sum_{\beta \in Z_{n,0}^n} \sum_{\pi \in \Pi_{n,0}} P_{\beta,\pi} \pi,$$
so we can write
$$ f_{n,\lambda}(u) = \sum_{k=0}^n {{n}\choose{k}} \sum_{j=k}^n (-1)^{j-k} \sum_{\alpha =0}^{n-j} \lambda^{n-j-\alpha} S(n-k-1,n-j-\alpha) \sum_{\beta \in Z_{j,\alpha,k}} \sum_{\pi \in \Pi_{j,\alpha}} P_{\beta,\pi} \pi.$$
After re-arranging the sums,
$$ f_{n,\lambda}(u) = \sum_{j=0}^n \sum_{\alpha =0}^{n-j} \lambda^{n-j-\alpha} \sum_{\pi \in \Pi_{j,\alpha}} \sum_{k=0}^j (-1)^{j-k} {{n}\choose{k}} S(n-k-1,n-j-\alpha) \sum_{\beta \in Z_{j,\alpha,k}}  P_{\beta,\pi} \pi.$$
In accordance with Definition \ref{coefficient} set
$$ C_\pi = \sum_{k=0}^j (-1)^{j-k} {{n}\choose{k}} S(n-k-1,n-j-\alpha) \sum_{\beta \in Z_{j,\alpha,k}}  P_{\beta,\pi}$$
so that
$$ f_{n,\lambda}(u) = \sum_{j=0}^n \sum_{\alpha =0}^{n-j} \lambda^{n-j-\alpha} \sum_{\pi \in \Pi_{j,\alpha}} C_\pi \pi.$$
By Lemma \ref{no-constants}, there are no terms of degree zero in the expansion of $f_{n,\lambda}(u)$, so we can start the outer sum at $j=1$.\end{proof}

\subsection{An alternate proof of the first K-L identity}

\noindent Fix a positive integer $n$ and a complex number $\lambda$. We showed in Lemma \ref{pi=lambda^alphau^j} that if $u' = \lambda u$ and $\pi \in \Pi_{j,\alpha}$, then $\pi = \lambda^\alpha u^j$. The K-L polynomial $f_{n,\lambda}(u)$ then becomes
$$f_{n,\lambda}(u) = \sum_{j=1}^n \sum_{\alpha=0}^{n-j} \lambda^{n-j-\alpha} \sum_{\pi \in \Pi_{j,\alpha}} C_\pi \lambda^\alpha u^j = \sum_{j=1}^n \left( \sum_{\alpha=0}^{n-j} \sum_{\pi \in \Pi_{j,\alpha}} C_\pi \right) \lambda^{n-j} u^j. $$
\begin{notation}Let $j$ be a positive integer and define\label{C*j}
$$ C^*_j = \sum_{\alpha=0}^{n-j} \sum_{\pi \in \Pi_{j,\alpha}} C_\pi.$$
\end{notation}
\noindent Substituting this definition into our expression for $f_{n,\lambda}(u)$ gives
$$ f_{n,\lambda}(u) = \sum_{j=1}^n C_j^* \lambda^{n-j} u^j.$$
If for each $1 \le j \le n$ it holds that $C_j^* = 0$, then $f_{n,\lambda}(u) = 0$.\bigskip 

\begin{lemma} Fix $1 \le j \le n$. Let $C^*_j$ be as defined in Notation \ref{C*j}. Then
$$ C_j^* = \sum_{k=0}^j (-1)^{j-k} {{n}\choose{k}} \sum_{\alpha = 0}^{n-j} W(j,\alpha,k) S(n-k-1,n-j-\alpha).$$\end{lemma}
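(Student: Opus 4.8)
The plan is to substitute the explicit formula for the coefficient $C_\pi$ obtained in the proof of Theorem \ref{expanded-form} into the definition of $C_j^*$ and then permute the finite sums so that the summation over $\pi$ recombines with the product rule coefficients $P_{\beta,\pi}$ into a weight.

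First I would write out, directly from Notation \ref{C*j} and the formula for $C_\pi$,
$$ C_j^* = \sum_{\alpha=0}^{n-j} \sum_{\pi \in \Pi_{j,\alpha}} C_\pi = \sum_{\alpha=0}^{n-j} \sum_{\pi \in \Pi_{j,\alpha}} \sum_{k=0}^j (-1)^{j-k}{{n}\choose{k}} S(n-k-1,n-j-\alpha) \sum_{\beta \in Z_{j,\alpha,k}} P_{\beta,\pi}. $$
Every sum appearing here is finite, so the order of summation may be rearranged freely. The factors $(-1)^{j-k}$, ${{n}\choose{k}}$, and $S(n-k-1,n-j-\alpha)$ are independent of both $\pi$ and $\beta$, so I would pull them outside the $\pi$- and $\beta$-sums and slide the sum over $\pi$ inward so that it sits directly against $P_{\beta,\pi}$, obtaining
$$ C_j^* = \sum_{\alpha=0}^{n-j} \sum_{k=0}^j (-1)^{j-k}{{n}\choose{k}} S(n-k-1,n-j-\alpha) \sum_{\beta \in Z_{j,\alpha,k}} \sum_{\pi \in \Pi_{j,\alpha}} P_{\beta,\pi}. $$

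Next I would recognize the innermost double sum. By Definition \ref{density}, $\sum_{\pi \in \Pi_{j,\alpha}} P_{\beta,\pi} = |\beta|$, and then by Definition \ref{weight}, $\sum_{\beta \in Z_{j,\alpha,k}} |\beta| = W(j,\alpha,k)$ whenever $k \ge 1$ and $\alpha \ge 0$. Substituting this identification and reordering so that the sum over $k$ is outermost yields exactly
$$ C_j^* = \sum_{k=0}^j (-1)^{j-k}{{n}\choose{k}} \sum_{\alpha=0}^{n-j} W(j,\alpha,k) S(n-k-1,n-j-\alpha), $$
as claimed.

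The one point requiring care, and the only step I expect to pose any obstacle, is the $k=0$ term, for which $Z_{j,\alpha,0}$ is not literally defined. I would dispatch this using the convention established in Remark \ref{weight-remark}, namely $W(j,\alpha,0) = W(j,\alpha,1)$, which is precisely the identification (adjoining a harmless leading $\partial^{\beta_0}1$) that makes the innermost sum for $k=0$ agree with the $k=1$ bookkeeping. With that convention in place the recognition step applies uniformly across all $0 \le k \le j$, so no separate case analysis is needed and the manipulation is otherwise purely formal interchange of finite sums.
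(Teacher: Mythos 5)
Your proposal is correct and follows essentially the same route as the paper's own proof: substitute the expression for $C_\pi$ from the proof of Theorem \ref{expanded-form} into Notation \ref{C*j}, interchange the finite sums, and recognize $\sum_{\beta \in Z_{j,\alpha,k}} \sum_{\pi \in \Pi_{j,\alpha}} P_{\beta,\pi}$ as $W(j,\alpha,k)$ via Definitions \ref{density} and \ref{weight}. Your explicit treatment of the $k=0$ case through the convention $W(j,\alpha,0)=W(j,\alpha,1)$ of Remark \ref{weight-remark} is a point the paper leaves implicit, but it is the same argument.
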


\begin{proof} From the proof of Theorem \ref{expanded-form},
$$ C_\pi = \sum_{k=0}^j (-1)^{j-k} {{n}\choose{k}} S(n-k-1,n-j-\alpha) \sum_{\beta \in Z_{j,\alpha,k}}  P_{\beta,\pi}.$$
Therefore,
\begin{align*}
C^*_j & = \sum_{\alpha=0}^{n-j} \sum_{\pi \in \Pi_{j,\alpha}} C_\pi \\
& = \sum_{\alpha=0}^{n-j} \sum_{\pi \in \Pi_{j,\alpha}} \sum_{k=0}^j (-1)^{j-k} {{n}\choose{k}} S(n-k-1,n-j-\alpha) \sum_{\beta \in Z_{j,\alpha,k}}  P_{\beta,\pi}.
\end{align*}
We re-arrange the sums:
$$ C_j^* = \sum_{k=0}^j (-1)^{j-k} {{n}\choose{k}} \sum_{\alpha=0}^{n-j} S(n-k-1,n-j-\alpha) \sum_{\beta \in Z_{j,\alpha,k}} \sum_{\pi \in \Pi_{j,\alpha}} P_{\beta,\pi}.$$
From the definition of the weight function (see Definition \ref{weight}),
$$ \sum_{\beta \in Z_{j,\alpha,k}} \sum_{\pi \in \Pi_{j,\alpha}} P_{\beta,\pi} = W(j,\alpha, k),$$
so
$$ C_j^* = \sum_{k=0}^j (-1)^{j-k} {{n}\choose{k}} \sum_{\alpha = 0}^{n-j} W(j,\alpha,k) S(n-k-1,n-j-\alpha).\qedhere$$
\end{proof}

\begin{lemma} \label{C*j=0} Fix $1 \le j \le n$. Let $C^*_j$ be as defined in Notation \ref{C*j}. Then
$$ C_j^* = \sum_{k=0}^j (-1)^{j-k} {{n}\choose{k}} \sum_{\alpha = 0}^{n-j} W(j,\alpha,k) S(n-k-1,n-j-\alpha) = 0.$$\end{lemma}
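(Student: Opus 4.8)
The plan is to substitute the closed form for the weight from Lemma \ref{weight-as-sum}, namely $W(j,\alpha,k)=\sum_{m=k}^{j}\frac{m^{m-k}}{(m-k)!}A_{m,j}\,m^{\alpha}$, into the expression for $C_j^*$, and then interchange the order of summation so that the sum over $m$ sits on the outside while the inner sum over $\alpha$ is paired with the factor $S(n-k-1,n-j-\alpha)$. For fixed $k$ and $m$ this isolates the quantity $\sum_{\alpha=0}^{n-j} m^{\alpha}S(n-k-1,n-j-\alpha)$, which is visibly close to the sum evaluated in Lemma \ref{product-sum-factorial}, but with the wrong range. Under the substitution $\alpha'=n-j-\alpha$ the upper limit would have to be $n-k-1$ for Lemma \ref{product-sum-factorial} to apply, whereas it is only $n-j$. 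The missing terms, those with $n-j<\alpha'\le n-k-1$, correspond to \emph{negative} values of the order $\alpha$; this is exactly the re-indexing flagged in Remark \ref{weight-remark}.

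The main obstacle is justifying that these extra negative-order terms contribute nothing. They are not individually zero; rather, each carries a factor $S(n-k-1,n-j-\alpha)$ independent of $m$, so that after summing against the coefficients $\frac{m^{m-k}}{(m-k)!}A_{m,j}$ the term vanishes precisely when the formula of Lemma \ref{weight-as-sum}, evaluated at a negative $\alpha$, equals zero. I would therefore prove that the extended weight $\sum_{m=k}^{j}\frac{m^{m-k}}{(m-k)!}A_{m,j}\,m^{\alpha}$ vanishes for $\alpha=-1,-2,\dots,-(j-k)$. The case $\alpha=-1$ is literally the defining recursion $A_{k,j}=-k\sum_{m=k+1}^{j}\frac{m^{m-k-1}}{(m-k)!}A_{m,j}$; the full range follows from the partial-fraction identity $\prod_{m=k}^{j}(1-mz)^{-1}=\sum_{m=k}^{j}\frac{m^{m-k}A_{m,j}/(m-k)!}{1-mz}$ underlying Lemma \ref{weight-as-sum} (the generating function $\sum_{\alpha}W(j,\alpha,k)z^{\alpha}$, by Corollary \ref{density-as-product}), read off as the vanishing of the coefficients of $z^{-1},\dots,z^{-(j-k)}$ in its Laurent expansion at infinity. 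This is the step that uses $m\ge 1$ critically, just as Remark \ref{weight-remark} warns: were $m=0$ admitted, the product would have one fewer pole and lose a degree of vanishing at infinity.

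With the extra terms absorbed, Lemma \ref{product-sum-factorial} converts each inner sum into $m^{k-j}\frac{(n+m-k-1)!}{(m-1)!}$, so that $C_j^*$ collapses to a double sum over $m$ and $k$ of $(-1)^{j-k}\binom{n}{k}$ times factorial expressions. I expect the $k=0$ contribution (which uses the $k=1$ coefficients, since $W(j,\alpha,0)=W(j,\alpha,1)$) to merge cleanly into the general summand, leaving, for each fixed $m$ between $1$ and $j$, the inner sum $\sum_{k=0}^{m}(-1)^{k}\binom{n}{k}\frac{(n+m-k-1)!}{(m-k)!}$ multiplied by the $m$-dependent constant $(-1)^{j}\frac{m^{m-j}A_{m,j}}{(m-1)!}$.

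Finally I would evaluate this inner sum. Writing $\frac{(n+m-k-1)!}{(m-k)!}=(n-1)!\binom{n+m-k-1}{m-k}$ and applying Proposition \ref{negative-binomial} to replace $\binom{n+m-k-1}{m-k}$ by $(-1)^{m-k}\binom{-n}{m-k}$, the sum becomes $(-1)^{m}(n-1)!\sum_{k=0}^{m}\binom{n}{k}\binom{-n}{m-k}$. The remaining sum is a Vandermonde convolution equal to $\binom{0}{m}$, which is $0$ for every $m\ge 1$. Hence each inner sum vanishes and $C_j^*=0$, proving the lemma. The one point demanding care is the bookkeeping at the boundaries, namely $k=0$ versus $k=1$ and the degenerate indices arising when $j=n$ or $k=n$, where the conventions for $S$ and $W$ at out-of-range arguments must be matched against their definitions.
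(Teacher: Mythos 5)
Your proposal is correct and follows the same overall route as the paper's proof: substitute the closed form of Lemma \ref{weight-as-sum}, evaluate the inner sum with Lemma \ref{product-sum-factorial}, reorder the summation over $m$ and $k$, and annihilate each fixed-$m$ sum via Proposition \ref{negative-binomial} and the convolution $\sum_{k=0}^m {{n}\choose{k}}{{-n}\choose{m-k}} = 0$ for $m \ge 1$.

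The substantive difference is your handling of the range mismatch, and here your version is actually more careful than the paper's. The paper performs the two steps in the opposite order: it first extends the re-indexed sum down to $\gamma = 0$, which is legitimate because the added terms contain $W(j,\alpha,k)$ at negative order and these vanish by Definition \ref{weight}; but it then substitutes the formula of Lemma \ref{weight-as-sum} into \emph{every} term of the extended sum, including the negative-order ones, where that lemma is neither stated nor a priori true (it is proved only for $\alpha \ge 0$). That substitution is valid precisely when $\sum_{m=k}^{j}\frac{m^{m-k}}{(m-k)!}A_{m,j}\,m^{\alpha}$ also vanishes at the negative orders $\alpha = -1,\ldots,-(j-k-1)$ encountered there --- a fact the paper never proves. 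Your partial-fraction argument supplies exactly this missing justification: the generating function $\sum_{\alpha\ge 0}W(j,\alpha,k)z^{\alpha} = \prod_{m=k}^{j}(1-mz)^{-1}$ decays like $z^{-(j-k+1)}$ at infinity, while the expansion of $\sum_{m}c_m/(1-mz)$ at infinity has $-\bigl(\sum_m c_m m^{-t}\bigr)$ as its $z^{-t}$ coefficient, forcing these to vanish for $t=1,\ldots,j-k$; and your observation that the case $\alpha=-1$ is literally the defining recursion of $A_{k,j}$ confirms the identification of the partial-fraction coefficients with those of Lemma \ref{weight-as-sum}. So although structurally identical to the paper's argument, your proof closes a real gap that the paper glosses over, and it gives concrete content to the warning in Remark \ref{weight-remark} that $m \ge 1$ is critical. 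Your boundary bookkeeping ($k=0$ handled through $W(j,\alpha,0)=W(j,\alpha,1)$, and no missing terms when $k \in \{j-1, j\}$) also checks out.
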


\begin{proof}Recall that Lemma \ref{product-sum-factorial} showed
$$ \sum_{\alpha =0}^n m^{\alpha+1} S(n,n-\alpha) = \frac{(m+n)!}{(m-1)!}. $$
To use Lemma \ref{product-sum-factorial} we must render $S(n-k-1,n-j-\alpha)$ into a form similar to the one that appears in Lemma \ref{product-sum-factorial}, namely
$$ S(n-k-1,n-k-1-\alpha).$$
The goal is achieved by letting $\gamma$ be a new index such that
$$ \alpha = \gamma - j + k + 1.$$
The inner sum over $\alpha$ becomes
$$ \sum_{\gamma = j-k-1}^{n-k-1} W(j,\gamma-j+k+1,k)S(n-k-1,n-k-1-\gamma).$$
If $\gamma < j - k-1$, then
$$ W(j,\gamma-j+k+1,k) = 0 $$
since $\gamma- (j-k-1) < 0$ (see Definition \ref{weight}). We may freely add the zero terms indexed by $0 \le \gamma \le j-k-2$. Our inner sum becomes
$$ \sum_{\gamma = 0}^{n-k-1} W(j,\gamma-j+k+1,k)S(n-k-1,n-k-1-\gamma).$$
We now apply Lemma \ref{weight-as-sum}, which says that there exist rational numbers $A_{m,j}$ such that
$$ W(j,\gamma-j+k+1,k) = \sum_{m=k}^j \frac{m^{m-k}}{(m-k)!} A_{m,j} m^{\gamma-j+k+1}.$$
We simplify the above expression:
$$  W(j,\gamma-j+k+1,k) = \sum_{m=k}^j \frac{m^{m-j}}{(m-k)!} A_{m,j} m^{\gamma + 1}.$$
We obtain
\begin{align*}& \sum_{\gamma=0}^{n-j} \sum_{m=k}^j \frac{m^{m-j}}{(m-k)!}A_{m,j} m^{\gamma+1} S(n-k-1,n-k-1-\gamma) \\
= &  \sum_{m=k}^j \frac{m^{m-j}}{(m-k)!}A_{m,j} \sum_{\gamma=0}^{n-j} m^{\gamma+1} S(n-k-1,n-k-1-\gamma).\end{align*}
Finally, we use Lemma \ref{product-sum-factorial}, which says that
$$ \sum_{\gamma=0}^{n-j} m^{\gamma+1} S(n-k-1,n-k-1-\gamma) = \frac{(n-k+m-1)!}{(m-1)!}.$$
This gives us that
\begin{align*}& \sum_{m=k}^j \frac{m^{m-j}}{(m-k)!}A_{m,j} \sum_{\gamma=0}^{n-j} m^{\gamma+1} S(n-k-1,n-k-1-\gamma)\\ = & \sum_{m=k}^j \frac{m^{m-j}}{(m-k)!}A_{m,j}\frac{(n-k+m-1)!}{(m-1)!}.\end{align*}
Therefore,
$$ C_j^* = \sum_{k=0}^j (-1)^{j-k} {{n}\choose{k}} \sum_{m=k}^j \frac{m^{m-j}}{(m-k)!}A_{m,j}\frac{(n-k+m-1)!}{(m-1)!}.$$
Now $C_j^*$ is expressed mostly in terms of factorials. We re-order the summation:
\begin{align*} C_j^* &= \sum_{k=0}^j (-1)^{j-k} {{n}\choose{k}} \sum_{m=k}^j \frac{m^{m-j}}{(m-k)!}A_{m,j}\frac{(n-k+m-1)!}{(m-1)!} \\
& = \sum_{k=0}^j \sum_{m=k}^j (-1)^{j-k} {{n}\choose{k}}  \frac{m^{m-j}}{(m-k)!}A_{m,j}\frac{(n-k+m-1)!}{(m-1)!} \\
& = \sum_{m=1}^j \sum_{k=0}^m (-1)^{j-k} {{n}\choose{k}}  \frac{m^{m-j}}{(m-k)!}A_{m,j}\frac{(n-k+m-1)!}{(m-1)!}
\end{align*}
We multiply and divide by $(n-1)!$:
\begin{align*} C_j^* &= \sum_{m=1}^j \sum_{k=0}^m (-1)^{j-k} {{n}\choose{k}}  \frac{m^{m-j}}{(m-k)!}A_{m,j}\frac{(n-k+m-1)!}{(m-1)!} \\
& = (n-1)! \sum_{m=1}^j \sum_{k=0}^m (-1)^{j-k} {{n}\choose{k}}  \frac{m^{m-j}}{(m-k)!}A_{m,j}\frac{(n-k+m-1)!}{(m-1)!(n-1)!} \end{align*}
Next, we multiply and divide by $(-1)^m$:
\begin{align*} C_j^* &= (n-1)! \sum_{m=1}^j (-1)^{j-m} \frac{m^{m-j}}{(m-1)!} A_{m,j} \sum_{k=0}^m (-1)^{m-k} {{n}\choose{k}} \frac{(n-k+m-1)!}{(m-k)!(n-1)!} \\
& = (n-1)! \sum_{m=1}^j (-1)^{j-m} \frac{m^{m-j}}{(m-1)!} A_{m,j} \sum_{k=0}^m (-1)^{m-k} {{n}\choose{k}} {{n - 1 + m-k}\choose{m-k}}
\end{align*}
We finally claim that
$$\sum_{k=0}^m (-1)^{-k} {{n}\choose{k}} {{n - 1 + m-k}\choose{m-k}} = 0$$
if $m\ge1$, giving $C_j^* = 0$.

Proposition \ref{negative-binomial} says that
$$(-1)^{m-k} {{n + m -k - 1}\choose{m-k}} = {{-n}\choose{m-k}}$$
so
$$ \sum_{k=0}^m (-1)^{m-k} {{n}\choose{k}} {{n - 1 + m-k}\choose{m-k}} = \sum_{k=0}^m {{n}\choose{k}} {{-n}\choose{m-k}}.$$
Using the binomial formula and series convolution, this can be interpreted as the sum of all $k$-th coefficients of $(1+z)^{n}$ multiplied by the $m-k$-th coefficient of $(1+z)^{-n}$. Equivalently, this is the $m$-th coefficient of $(1+z)^n(1+z)^{-n}$, which is the constant function:
\begin{equation} \sum_{k=0}^m {{n}\choose{k}} {{-n}\choose{m-k}} = [z^m](1+z)^n(1+z)^{-n} = [z^m]1 = \begin{cases} 0 & m \ge 1 \\ 1 & m = 0 \end{cases} \label{coeffs-of-one}\end{equation}
where $[z^m]\phi$ denotes the coefficient on $z^m$ in the series of expansion of some function $\phi$.

Of course, the constant function has no nonzero $z^m$ terms if $m \ge 1$. Therefore, $C^*_j = 0$ as claimed.
\end{proof}

\begin{remark} Recall (see Remark \ref{weight-remark}) that care was taken to ensure that in the definition of $W(j,\alpha,k)$, the $k=0$ case reduces to the $k=1$ case. Therefore, we only consider $m \ge 1$. If $m = 0$, then $(m-1)!$ is undefined, so the claimed expression for $C_j^*$ is nonsense.\end{remark}

\begin{proof}[Proof of Theorem \ref{kl1}] Fix a positive integer $n$ and a complex number $\lambda$. If $u' = \lambda u$, then using Notation \ref{C*j},
$$ f_{n,\lambda}(u) = \sum_{j=1}^n C_j^* \lambda^{n-j} u^j.$$
Lemma \ref{C*j=0} then gives that $C_j^* = 0$ for all $1 \le j \le n$, so $f_{n,\lambda}(u) = 0$.\end{proof}


\section{Rejection of new patterns}

%
%

\noindent Thanks to Thomas Bellsky for helpful conversations about this chapter. For more information about the decomposition of differential operators and the use of differential algebra to extend these methods to the PDE case, we refer the reader to \cite{Sch}.

\subsection{The linear part of the K-L polynomials}

\noindent We begin by stating some lemmas necessary to establish Theorem \ref{thm5}, which largely concern the first-degree terms of $f_{n,\lambda}(u)$. Recall that in our expanded form for the $f_{n,\lambda}(u)$ given in Theorem \ref{expanded-form} that the index $j$ refers to the degree of a term (see Definition \ref{pi}).

\begin{definition}\label{linear-part}The {\it linear part} of $f_{n,\lambda}(u)$, denoted $f_{n,\lambda}^L(u)$, is the sum of all terms in $f_{n,\lambda}(u)$ where $j=1$, in other words
$$ f_{n,\lambda}^L(u) = \sum_{\alpha=0}^{n-1} \lambda^{n-1-\alpha} \sum_{\pi \in \Pi_{1,\alpha}} C_\pi \pi.$$\end{definition}

\begin{lemma} Let $C_\alpha = C_{u^{(\alpha)}}$. Then
$$ f_{n,\lambda}^L(u) = \sum_{\alpha=0}^{n-1} \lambda^{n-1-\alpha} C_\alpha u^{(\alpha)}.$$\end{lemma}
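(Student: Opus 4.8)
The plan is to unwind the relevant definitions; the entire content of the lemma is the observation that, for degree $j = 1$, there is exactly one differential product of each order, so the inner sum in Definition \ref{linear-part} collapses to a single term. No genuine computation is required.

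First I would recall Definition \ref{pi}: a differential product of degree $j$ and order $\alpha$ is a product $\prod_{m=1}^{j} u^{(\alpha_m)}$ of $j$ derivatives of $u$ whose orders satisfy $\alpha_1 + \cdots + \alpha_j = \alpha$. Specializing to $j = 1$ forces a single factor $u^{(\alpha_1)}$ with $\alpha_1 = \alpha$. Hence the only element of $\Pi_{1,\alpha}$ is $u^{(\alpha)}$ itself, so that $\Pi_{1,\alpha} = \{ u^{(\alpha)} \}$ is a singleton.

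Next I would substitute this into the expression for the linear part given in Definition \ref{linear-part},
$$ f_{n,\lambda}^L(u) = \sum_{\alpha=0}^{n-1} \lambda^{n-1-\alpha} \sum_{\pi \in \Pi_{1,\alpha}} C_\pi \pi, $$
whereupon the inner sum reduces to its single summand $C_{u^{(\alpha)}} u^{(\alpha)}$. Applying the notational convention $C_\alpha = C_{u^{(\alpha)}}$ then yields the claimed formula directly.

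There is no real obstacle to overcome here: the statement is a purely notational consequence of the fact that a degree-one differential product is determined entirely by its order. The only point requiring any care is the verification that $\Pi_{1,\alpha}$ has cardinality exactly one, which is immediate from Definition \ref{pi}.
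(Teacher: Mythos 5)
Your proof is correct and is essentially the paper's own argument: the paper likewise observes that for $j=1$ the constraint $\alpha_1 + \cdots + \alpha_j = \alpha_1 = \alpha$ forces $\Pi_{1,\alpha} = \{u^{(\alpha)}\}$, so the inner sum in Definition \ref{linear-part} collapses to a single term. Your write-up merely spells out this singleton observation in more detail than the paper does.
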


\begin{proof} 
This follows from the definition of $\pi \in \Pi_{1,\alpha}$
since we require $\alpha_1 + \cdots + \alpha_j = \alpha_1 = \alpha$ in this setting.\end{proof}

\begin{lemma}\label{Calpha}Where the function $S(n,\alpha)$ is defined as in Theorem \ref{product-sum},$$ C_\alpha = nS(n-2,n-1-\alpha) - S(n-1,n-1-\alpha).$$\end{lemma}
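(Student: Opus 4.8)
The plan is to specialize the general coefficient formula established in the proof of Theorem \ref{expanded-form}, namely
$$ C_\pi = \sum_{k=0}^j (-1)^{j-k} {{n}\choose{k}} S(n-k-1,n-j-\alpha) \sum_{\beta \in Z_{j,\alpha,k}}  P_{\beta,\pi},$$
to the degree-one case $j = 1$ with $\pi = u^{(\alpha)}$, and then simply read off the two surviving terms. Since $f_{n,\lambda}^L(u)$ collects precisely the $j=1$ terms, and $C_\alpha = C_{u^{(\alpha)}}$ by definition, this is exactly the coefficient we want.

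First I would record the product-rule data in degree one, where everything collapses. The set $\Pi_{1,\alpha}$ consists of the single differential product $u^{(\alpha)}$, and $Z_{1,\alpha}$ is the singleton $\setlist{(\alpha)}$. Its differential word is $w((\alpha)) = \partial^\alpha u = u^{(\alpha)}$, so there is exactly one product rule coefficient, $P_{(\alpha),u^{(\alpha)}} = 1$. In particular $\sum_{\beta \in Z_{1,\alpha,1}} P_{\beta,u^{(\alpha)}} = 1$. Thus in the $j=1$ specialization the inner sums over $\beta$ contribute nothing beyond the value $1$, and the formula reduces to a two-term sum over $k \in \setlist{0,1}$.

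Next I would evaluate those two terms. For $k = 1$ the sign is $(-1)^{1-1} = 1$, the binomial coefficient is $\binom{n}{1} = n$, and the product-sum index is $S(n-1-1,\,n-1-\alpha) = S(n-2,n-1-\alpha)$, giving the term $nS(n-2,n-1-\alpha)$. For $k = 0$ the sign is $(-1)^{1-0} = -1$, $\binom{n}{0} = 1$, and $S(n-0-1,\,n-1-\alpha) = S(n-1,n-1-\alpha)$, giving $-S(n-1,n-1-\alpha)$. Adding these two yields
$$ C_\alpha = nS(n-2,n-1-\alpha) - S(n-1,n-1-\alpha),$$
which is the claimed identity.

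The one delicate point — and the step I would treat most carefully — is the $k = 0$ term, since the set $Z_{j,\alpha,k}$ of Notation \ref{integer-tuples} is literally defined only for $k \ge 1$. Here I would invoke the convention fixed in Remark \ref{weight-remark}, under which the $k=0$ case is identified with the $k=1$ case (the rightmost factor $u^0 = 1$ forces any differentiation to annihilate it, so the surviving words match those of $k=1$); this justifies taking the inner $\beta$-sum to equal $1$ for $k=0$ as well. Everything else is a direct substitution into the already-proven formula for $C_\pi$, so no further combinatorial input is needed.
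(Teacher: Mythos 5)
Your proof is correct, but it follows a genuinely different route from the paper's. The paper proves Lemma \ref{Calpha} directly from Definition \ref{K-L-poly}: it observes that the operators $(\partial - u + m\lambda)$ never lower degree, so degree-one terms can arise only from the $k=0$ and $k=1$ summands, and then counts configurations in each by hand --- for $k=1$ the left-most operator must differentiate and the $n-1-\alpha$ scalar factors are drawn from $\setlist{1,\ldots,n-2}$, giving $nS(n-2,n-1-\alpha)$, while for $k=0$ exactly one operator must act as multiplication by $-u$ (which is also where the minus sign comes from) and the scalars are drawn from $\setlist{1,\ldots,n-1}$, giving $-S(n-1,n-1-\alpha)$. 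You instead specialize the closed formula for $C_\pi$ established in the proof of Theorem \ref{expanded-form} to $j=1$, using that $\Pi_{1,\alpha}$ and $Z_{1,\alpha}$ are singletons with $P_{(\alpha),u^{(\alpha)}}=1$. This is legitimate --- it is exactly how the paper itself re-uses that formula in the lemmas leading to Lemma \ref{C*j=0} --- and it is more economical, since all the combinatorial content is inherited from Lemma \ref{kth-term-expansion} rather than re-derived. You were also right to single out the $k=0$ term as the delicate point: the identification of the $k=0$ data with the $k=1$ data (Remark \ref{weight-remark}) is what makes the inner $\beta$-sum equal to $1$ there, and your justification (differentiation annihilates the constant $u^0=1$) is the paper's own reasoning. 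What the paper's direct argument buys in exchange is self-containedness: it does not lean on a formula that appears only inside another proof, and it makes transparent why the two surviving contributions involve scalar ranges $\setlist{1,\ldots,n-2}$ and $\setlist{1,\ldots,n-1}$ respectively, which your substitution treats as a black box.
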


\begin{proof} The $C_\alpha$ are attached to terms in $f_{n,\lambda}(u)$ whose terms have degree $j=1$. The operator
$$ \prod_{m=0}^{n-k-1} (\partial - u + \lambda u) $$
does not decrease the degree of the term. Therefore, we know these terms may only arise from $k=1$ term,
$${{n}\choose{1}}(\partial- u)\cdots (\partial-u+(n-2)\lambda)u,$$
and the $k=0$ term,
$${{n}\choose{0}}(\partial-u)\cdots(\partial-u+(n-1)\lambda)1.$$
Fix $\alpha$. The term
$${{n}\choose{1}}(D- u)\cdots (D-u+(n-2)\lambda)u  $$
contributes $u^{(\alpha)}$ when the left-most operator and $\alpha-1$ of the remaining operators act by differentiating $u$. There are $n-2$ operators other than the left-most one, so $n-2-(\alpha - 1) = n-1-\alpha$ of these operators act by multiplication by an integer and $\lambda$. (None of these operators will act as multiplication by $-u$; otherwise, we would arrive at a term of degree $j > 1$.) Any option gives a product of $n-1-\alpha$ integers between $1$ and $n-2$. The sum of all these is
$$S(n-2,n-1-\alpha).$$
We multiply by ${{n}\choose{1}} = n$ to complete this part of the coefficient.

Likewise, the term
$${{n}\choose{0}}(\partial-u)\cdots(\partial-u+(n-1)\lambda)1$$
must act by multiplication by $-u$ exactly once to contribute a term of degree $j=1$. The remaining $n$ operators must contribute $\alpha$ derivatives. This leaves $n-1-\alpha$ operators that act as multiplication by an integer and $\lambda$. Adding together all of the options yields
$$ S(n-1,n-1-\alpha).$$
Multiplying by ${{n}\choose{0}} = 1$ and adding the contributions of the terms together gives
$$ C_\alpha = nS(n-2,n-1-\alpha) - S(n-1,n-1-\alpha) $$
as needed.
\end{proof}

\noindent The next lemma provides us with a generating function for the $C_\alpha$.

\begin{lemma}\label{h(z)} Let $n \ge 2$ and
$$ h_{n-1}(z) = (n-1)(1-z)\prod_{m=1}^{n-2}(1+mz).$$
Then $h_{n-1}(z)$ is a generating function for the numbers $C_{n-1-\alpha}$, or 
$$ h_{n-1}(z) = \sum_{\alpha=0}^n C_{n-1-\alpha}z^\alpha.$$\end{lemma}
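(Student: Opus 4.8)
The plan is to reduce the statement to the explicit formula for the coefficients $C_\alpha$ from Lemma \ref{Calpha} together with the product form of the generating function $g_n(z)$ from Lemma \ref{g(z)}. First I would substitute $\alpha \mapsto n-1-\alpha$ into the formula of Lemma \ref{Calpha} to obtain a clean expression for the coefficients that actually appear in the claimed series, namely
$$ C_{n-1-\alpha} = nS(n-2,\alpha) - S(n-1,\alpha).$$
This immediately recasts the sum $\sum_{\alpha} C_{n-1-\alpha}z^\alpha$ as a difference of two of the $g$-series.

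Next I would invoke Lemma \ref{g(z)}, which identifies $\sum_\alpha S(n-2,\alpha)z^\alpha = g_{n-2}(z) = \prod_{a=1}^{n-2}(1+az)$ and $\sum_\alpha S(n-1,\alpha)z^\alpha = g_{n-1}(z) = \prod_{a=1}^{n-1}(1+az)$. Collecting coefficients then yields
$$ \sum_{\alpha=0}^n C_{n-1-\alpha}z^\alpha = n\,g_{n-2}(z) - g_{n-1}(z),$$
where the upper limit $n$ is harmless because $C_{n-1-\alpha}=0$ for $\alpha=n$ by the conventions of Definition \ref{product-sum}.

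The key step is the elementary observation that the $g$-series satisfy the one-term recurrence $g_{n-1}(z) = (1+(n-1)z)\,g_{n-2}(z)$, obtained by peeling off the top factor $(1+(n-1)z)$ from the product. Substituting this and factoring gives
$$ n\,g_{n-2}(z) - \big(1+(n-1)z\big)g_{n-2}(z) = \big(n-1-(n-1)z\big)g_{n-2}(z) = (n-1)(1-z)\prod_{m=1}^{n-2}(1+mz),$$
which is precisely $h_{n-1}(z)$.

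The main obstacle here is purely bookkeeping rather than any analytic content: I must make sure the index shift $\alpha \mapsto n-1-\alpha$ is applied consistently, and that the boundary cases where $S(n-2,\cdot)$ or $S(n-1,\cdot)$ vanish are handled by the conventions in Definition \ref{product-sum}. Once the coefficient formula of Lemma \ref{Calpha} and the product-recurrence for $g$ are in hand, the proof is essentially a two-line generating-function manipulation.
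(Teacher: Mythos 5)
Your proposal is correct and follows essentially the same route as the paper: substitute $\alpha \mapsto n-1-\alpha$ in Lemma \ref{Calpha}, recognize the result as $n\,g_{n-2}(z) - g_{n-1}(z)$ via Lemma \ref{g(z)}, and factor out $\prod_{m=1}^{n-2}(1+mz)$ to obtain $(n-1)(1-z)\prod_{m=1}^{n-2}(1+mz)$. Your explicit handling of the boundary term (noting $C_{n-1-\alpha}=0$ at $\alpha=n$ by the vanishing conventions of Definition \ref{product-sum}) is the same bookkeeping the paper performs by re-indexing with $S(n-2,n-1)=0$.
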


\begin{proof}The expression for $h_{n-1}(z)$ looks like the expression for $g_{n-1}(z)$ given in Lemma \ref{g(z)}, which recall is written in summation form as
$$ g_{n-1}(z) = \sum_{\alpha = 0}^{n-1} S(n-1,\alpha)z^\alpha.$$
We will begin with the expression of the $C_{\alpha}$ determined in Lemma \ref{Calpha},
$$ C_\alpha = nS(n-2,n-1-\alpha) - S(n-1,n-1-\alpha),$$
and build it into something that allows us to leverage $g_{n-1}(z)$.
Replace $\alpha$ with $n-1-\alpha$:
$$ C_{n-1-\alpha} = nS(n-2,\alpha) - S(n-1,\alpha).$$
Next, multiply both sides by $z^\alpha$ and sum from $\alpha =0$ to $\alpha = n-1$.
\begin{align*}
& C_{n-1-\alpha}z^\alpha = nS(n-2,\alpha)z^\alpha - S(n-1,\alpha)z^\alpha. \\
& \sum_{\alpha=0}^{n-1} C_{n-1-\alpha}z^\alpha = n\sum_{\alpha=0}^{n-1}S(n-2,\alpha)z^\alpha - \sum_{\alpha=0}^{n-1} S(n-1,\alpha)z^\alpha.
\end{align*}
Because $S(n-2,n-1) = 0$, we re-index the left sum in the following way:
$$\sum_{\alpha=0}^{n-1} C_{n-1-\alpha}z^\alpha = n\sum_{\alpha=0}^{n-2}S(n-2,\alpha)z^\alpha - \sum_{\alpha=0}^{n-1} S(n-1,\alpha)z^\alpha.$$
By Lemma \ref{g(z)},
$$h_{n-1}(z) = ng_{n-2}(z) - g_{n-1}(z).$$
Now, we use the product expression for $g_{n-1}(z)$:
\begin{align*}
h_{n-1}(z) & = n\prod_{m=1}^{n-2}(1 + mz) -\prod_{m=1}^{n-1}(1 + mz) \\
& = [n - (1 + (n-1)z]\prod_{m=1}^{n-2}(1 + mz) \\
& = (n-1)(1-z)\prod_{m=1}^{n-2}(1 + mz). \qedhere
\end{align*}
\end{proof}
\noindent The following corrolary will be necessary in the next section:
\begin{cor}\label{h(z)-roots}The roots of $h_{n-1}(z)$ are all rational; they are $z = 1, -1, -\frac{1}{2}, \ldots, -\frac{1}{n-2}$.\end{cor}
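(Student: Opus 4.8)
The plan is to read the roots directly off the factored expression for $h_{n-1}(z)$ supplied by Lemma \ref{h(z)}, namely
$$ h_{n-1}(z) = (n-1)(1-z)\prod_{m=1}^{n-2}(1+mz). $$
Since $n \ge 2$, the scalar $(n-1)$ is a nonzero constant and contributes no roots, so $h_{n-1}(z) = 0$ if and only if one of the remaining factors vanishes. The whole corollary is thus a matter of inspecting linear factors.

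First I would handle the factor $(1-z)$, which vanishes precisely when $z = 1$. Next I would observe that for each $m$ with $1 \le m \le n-2$ the linear factor $(1+mz)$ vanishes precisely when $z = -\frac{1}{m}$; letting $m$ run over $1, \ldots, n-2$ produces the values $-1, -\frac{1}{2}, \ldots, -\frac{1}{n-2}$. Each of these numbers is manifestly rational, which establishes the rationality claim.

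To confirm that no roots have been missed, I would count degrees: the factor $(1-z)$ has degree $1$ and the product has degree $n-2$, so $h_{n-1}$ has degree $n-1$. The $n-1$ roots just exhibited are pairwise distinct---the single positive root $z=1$ differs from every negative root $-\frac{1}{m}$, and distinct values of $m$ give distinct values $-\frac{1}{m}$---so a degree-$(n-1)$ polynomial with $n-1$ distinct roots can have no others. There is no genuine obstacle here: the entire content of the corollary is already encoded in the factored form established in Lemma \ref{h(z)}, and the only point requiring care is the bookkeeping that the listed roots are distinct and exhaust the degree. (When $n=2$ the product over $m$ is empty and the root list collapses to the single root $z=1$, consistent with $h_1(z) = (1-z)$, so the statement remains correct at the boundary of the range.)
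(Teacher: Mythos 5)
Your proposal is correct and is precisely the argument the paper intends: the corollary is stated without proof because it follows immediately from the factored form $h_{n-1}(z) = (n-1)(1-z)\prod_{m=1}^{n-2}(1+mz)$ established in Lemma \ref{h(z)}, exactly as you read it off. Your degree-counting check and the remark on the degenerate case $n=2$ are sound additions but not genuinely different in approach.
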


\subsection{Proof of the theorem}

\begin{definition}\label{roots-of-unity} Let $q$ be a positive integer. A complex number $\zeta$ is called a $q${\it-th root of unity} if $\zeta^q = 1$, and is called {\it primitive} if $\zeta^n \neq 1$ for all $2 \le n \le q-1$.\end{definition}

\begin{theorem}\label{thm5}Let $u$ be a smooth function, $\lambda$ a nonzero complex number, and $m \ge 3$ an integer such that $u^{(m)} = \lambda^m u$. Let $n$ be an integer greater than $2$. If $f^L_{n,\lambda}(u) = 0$, then $u' = \lambda u$ or $u'' = \lambda^2 u$.\end{theorem}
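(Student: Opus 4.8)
The plan is to read $f^L_{n,\lambda}$ as a constant-coefficient linear differential operator and to use the hypothesis $u^{(m)}=\lambda^m u$ to pin down the exponential building blocks of $u$. Since $\lambda\neq 0$, the characteristic polynomial $r^m-\lambda^m$ of the equation $u^{(m)}=\lambda^m u$ has $m$ \emph{distinct} roots, namely $\lambda\zeta$ as $\zeta$ ranges over the $m$-th roots of unity. Hence every smooth solution has the form $u=\sum_{\zeta^m=1}c_\zeta e^{\lambda\zeta x}$ for constants $c_\zeta$, and the functions $e^{\lambda\zeta x}$ are linearly independent because their exponents $\lambda\zeta$ are pairwise distinct.

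First I would apply $f^L_{n,\lambda}$ to each basis exponential. Using $\bigl(e^{\lambda\zeta x}\bigr)^{(\alpha)}=(\lambda\zeta)^\alpha e^{\lambda\zeta x}$ together with the formula in Definition \ref{linear-part}, a short computation gives
$$ f^L_{n,\lambda}\!\left(e^{\lambda\zeta x}\right)=\lambda^{n-1}\left(\sum_{\alpha=0}^{n-1}C_\alpha\,\zeta^\alpha\right)e^{\lambda\zeta x}, $$
so each $e^{\lambda\zeta x}$ is an eigenfunction of the operator. Imposing $f^L_{n,\lambda}(u)=0$ and invoking linear independence forces $c_\zeta\bigl(\sum_\alpha C_\alpha\zeta^\alpha\bigr)=0$ for every $\zeta$; since $\lambda\neq 0$, every $\zeta$ that actually occurs in $u$ (that is, with $c_\zeta\neq 0$) must satisfy $\sum_{\alpha=0}^{n-1}C_\alpha\zeta^\alpha=0$.

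The heart of the argument is to recognize this vanishing condition through the generating function $h_{n-1}$ of Lemma \ref{h(z)}. Reversing the index in $h_{n-1}(z)=\sum_\alpha C_{n-1-\alpha}z^\alpha$ yields the identity $\sum_{\alpha}C_\alpha\zeta^\alpha=\zeta^{n-1}h_{n-1}(1/\zeta)$, so the condition becomes $h_{n-1}(1/\zeta)=0$ (using $\zeta\neq 0$). By Corollary \ref{h(z)-roots} the roots of $h_{n-1}$ are exactly $1,-1,-\frac{1}{2},\ldots,-\frac{1}{n-2}$, whence $1/\zeta\in\{1,-1,-\frac{1}{2},\ldots,-\frac{1}{n-2}\}$, i.e.\ $\zeta\in\{1,-1,-2,\ldots,-(n-2)\}$. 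This is where the root-of-unity hypothesis does the real work: $\zeta$ lies on the unit circle, so of these candidates only $\zeta=1$ and $\zeta=-1$ survive. I expect this step---intersecting the rational root set of $h_{n-1}$ with the unit circle---to be the crux, since everything preceding it is bookkeeping and it is precisely here that the forced dichotomy emerges.

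Finally I would translate the resulting description $u=c_1 e^{\lambda x}+c_{-1}e^{-\lambda x}$ back into the two differential equations. If $c_{-1}=0$ then $u=c_1 e^{\lambda x}$ satisfies $u'=\lambda u$; otherwise every surviving exponential satisfies $u''=\lambda^2 u$, so $u''=\lambda^2 u$ by linearity. Either way the claimed dichotomy holds, and the degenerate case $u\equiv 0$ trivially satisfies $u'=\lambda u$, completing the proof. I would also remark that when $m$ is odd, $-1$ is not even an $m$-th root of unity, so in that case only $\zeta=1$ can appear and the conclusion collapses to $u'=\lambda u$; this does not affect the statement.
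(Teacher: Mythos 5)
Your proof is correct and follows essentially the same route as the paper's: write $u$ as a combination of exponentials $e^{\lambda\zeta x}$ via the characteristic equation, apply $f^L_{n,\lambda}$ and use linear independence of the exponentials, recognize the resulting coefficient sums as values of the generating polynomial $h_{n-1}$, and intersect its rational root set (Corollary \ref{h(z)-roots}) with the unit circle to force $\zeta=\pm1$. The differences are cosmetic---you evaluate the operator on each basis exponential and work with $h_{n-1}(1/\zeta)$ instead of the paper's conjugate manipulation---and your explicit treatment of $u\equiv 0$ and of odd $m$ is, if anything, slightly more careful than the original.
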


\begin{proof} By the hypothesis that $u^{(m)} = \lambda^m u$, we can solve the characteristic equation
$$ y^m = \lambda^m$$
to determine that all solutions to $u^{(m)} = \lambda^m u$ have the form
$$ u(x) = \sum_{r = 0}^{m-1} \beta_r e^{\lambda \zeta^r x},$$
where $\zeta$ is a primitive $m$-th root of unity and $\beta_0, \ldots, \beta_{m-1}$ are complex scalars.

Recall from Definition \ref{linear-part} that the linear part of $f_{n,\lambda}(u)$ is given by
$$ f{n,\lambda}^L(u) = \sum_{\alpha = 0}^{n-1} C_\alpha u^{(\alpha)}.$$
We will substitute
$$ u(x) = \sum_{r = 0}^{m-1} \beta_r e^{\lambda \zeta^r x}$$
into $f_{n,\lambda}^L(u) = 0$ and show that the only nonzero scalars are $\beta_0$ and, if $m$ is even, $\beta_{\frac{m}{2}}$.
\begin{align*}f^L(u) & = \sum_{\alpha = 0}^{n-1} \lambda^{n-1-\alpha} C_\alpha \partial^\alpha \left( \sum_{r=0}^{m-1} \beta_r e^{\lambda \zeta^r x} \right) \\
&= \sum_{r=0}^{m-1} \sum_{\alpha = 0}^{n-1} \lambda^{n-1-\alpha} C_\alpha \beta_r \lambda^\alpha \zeta^{r\alpha} e^{\lambda \zeta^r x}\\
& = \sum_{r=0}^{m-1} \lambda^{n-1} \beta_r \left( \sum_{\alpha=0}^{n-1} \zeta^{r\alpha} C_\alpha \right) e^{\lambda \zeta^r x}.
\end{align*}
Recall that the sum
$$ \sum_{\alpha=0}^{n-1} \zeta^{r\alpha} C_\alpha = \sum_{\alpha=0}^{n-1} \bar{\zeta}^{-r\alpha} C_\alpha = \bar{\zeta}^{r(1-n)} \sum_{\alpha=0}^{n-1} \bar{\zeta}^{r(n-1-\alpha)} C_\alpha$$
is exactly 
$$\bar{\zeta}^{r(1-n)} h_{n-1}(\bar{\zeta}^{-r}) = \zeta^{r(n-1)}h_{n-1}(\zeta^r),$$
where $h_{n-1}(z)$ is the generating polynomial defined in Lemma \ref{h(z)}. Finally, we arrive at
$$ \sum_{r=0}^{m-1} \lambda^{n-1} \zeta^{r(n-1)} \beta_r h_{n-1}(\zeta^r) e^{\lambda \zeta^r x} = 0,$$
which is a relation of linear dependence among the distinct functions $e^{\lambda \zeta^r x}$. The $e^{\lambda \zeta^r x}$ are linearly independent, and the only way this relation can hold is if for all $r$,
$$ \lambda^{n-1} \zeta^{r(n-1)} \beta_r h_{n-1}(\zeta^r) = 0. $$
The parameter $\lambda$ is nonzero, as is $\zeta$, and the only way $h_{n-1}(\zeta^r)$ can be zero is if $\zeta^r$ is rational (see Corollary \ref{h(z)-roots}). This implies that if $h_{n-1}(\zeta^r)=0$, then $r$ must be zero, or if $m$ is even, $\frac{m}{2}$. In either case, we have $h_{n-1}(1)=0$ or $h_{n-1}(-1)=0$ and we are forced to conclude that the only nonzero scalars in the expression for $u$ are $\beta_0$ and  $\beta_{\frac{m}{2}}$ if $\frac{m}{2}$ is an integer. Therefore if $m$ is odd, then
$$ u(x) = \beta_0 e^{\lambda x},$$
so $u' = \lambda u$. If $m$ is even, then
$$ u(x) = \beta_0 e^{\lambda x} + \beta_{\frac{m}{2}} e^{-\lambda x},$$
so $u'' = \lambda^2 u$.
\end{proof}

\subsection{The roots of the linear part}

\noindent Not only do our methods reveal that $\partial^m u - \lambda^m u$ is not a factor of the decomposition of the linear part of the K-L polynomial, they actually give an entire decomposition. Observe that we can write the linear part in the following way:
$$ f^L_{n,\lambda}(u) = \sum_{\alpha=0}^{n-1} \lambda^{n-1-\alpha} C^\alpha u^{(\alpha)} = \left(\sum_{\alpha=0}^{n-1} \lambda^{n-1} C_\alpha \partial^\alpha \right)(u).$$
The operator acting on $u$ looks like it may be an instance of the generating function $h_{n-1}(z)$. The next lemma will prove that this is in fact the case.

\begin{lemma}\label{decomp}Let $\lambda$ be a complex number and $n$ be a positive integer. Let $C_\alpha$ be as found in Lemma \ref{Calpha}. Let $u$ be a smooth function. Then
$$ f_{n,\lambda}(u) = (n-1)(\partial - \lambda)\left(\prod_{a=1}^{n-2}(\partial + a\lambda)\right)(u).$$\end{lemma}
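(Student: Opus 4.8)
The plan is to recognize the linear-part operator as an evaluation of the generating polynomial $h_{n-1}$ from Lemma \ref{h(z)}, whose product form already displays the desired linear factors. As the surrounding discussion makes clear, the object to be factored is the linear part $f^L_{n,\lambda}(u)$, which the displayed identity just before the lemma writes as $\left(\sum_{\alpha=0}^{n-1}\lambda^{n-1-\alpha}C_\alpha\partial^\alpha\right)(u)$. Since multiplication by a scalar commutes with $\partial$, the first-order operators $\partial-\lambda$ and $\partial+a\lambda$ commute with one another, so the product on the right-hand side expands exactly like an ordinary product of linear polynomials in the single symbol $\partial$. It therefore suffices to establish the operator identity $\sum_{\alpha=0}^{n-1}\lambda^{n-1-\alpha}C_\alpha\partial^\alpha = (n-1)(\partial-\lambda)\prod_{a=1}^{n-2}(\partial+a\lambda)$, both sides being genuine polynomials in the commuting symbols $\partial$ and $\lambda$, and then to apply both sides to $u$.

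First I would homogenize $h_{n-1}$. Lemma \ref{h(z)} supplies both the coefficient form $h_{n-1}(z)=\sum_{\alpha=0}^{n-1}C_{n-1-\alpha}z^\alpha$ and the product form $h_{n-1}(z)=(n-1)(1-z)\prod_{m=1}^{n-2}(1+mz)$. Setting $H(x,y)=y^{n-1}h_{n-1}(x/y)$ turns these into $H(x,y)=\sum_{\alpha=0}^{n-1}C_{n-1-\alpha}\,x^\alpha y^{n-1-\alpha}$ and $H(x,y)=(n-1)(y-x)\prod_{m=1}^{n-2}(y+mx)$ respectively; since $h_{n-1}$ has degree $n-1$, the factor $y^{n-1}$ clears every denominator and both expressions are honest polynomials in $x$ and $y$, hence equal.

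Next I would substitute $x=\lambda$ and $y=\partial$. On the coefficient side, re-indexing by $\beta=n-1-\alpha$ converts $H(\lambda,\partial)=\sum_{\alpha}C_{n-1-\alpha}\lambda^\alpha\partial^{n-1-\alpha}$ into $\sum_{\beta=0}^{n-1}C_\beta\lambda^{n-1-\beta}\partial^\beta$, which is precisely the linear-part operator. On the product side the same substitution gives $H(\lambda,\partial)=(n-1)(\partial-\lambda)\prod_{m=1}^{n-2}(\partial+m\lambda)$, which is the claimed factorization with $a=m$. Because $\lambda$ is a scalar and commutes with $\partial$, there is no ambiguity in the ordering of the factors, so equating the two forms of $H(\lambda,\partial)$ yields the operator identity; applying both sides to $u$ finishes the argument.

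The computation is essentially bookkeeping, so I do not expect a deep obstacle; the single point requiring care is the index reversal. The coefficient $C_\alpha$ sits on $\partial^\alpha$ in the linear part, whereas in $h_{n-1}$ it is $C_{n-1-\alpha}$ that multiplies $z^\alpha$, so a naive substitution $z\mapsto\partial$ would factor the reciprocal polynomial rather than the one we want. Homogenizing and then choosing $x=\lambda$, $y=\partial$ (rather than $x=\partial$, $y=\lambda$) is exactly what reverses the coefficient order correctly while simultaneously distributing one factor of $\lambda$ into each of the $n-1$ linear factors, so it is worth isolating this choice as the crux. I would also remark that working with the polynomial $H$ sidesteps any worry about dividing by $\lambda$, so the identity holds for every $\lambda$, including $\lambda=0$.
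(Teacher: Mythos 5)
Your proof is correct, and it takes a genuinely different (and tidier) route than the paper's. The paper proves the identity by brute-force expansion at the operator level: it splits $(n-1)(\partial-\lambda)=n\partial-(\partial+(n-1)\lambda)$, expands $n\partial\prod_{a=1}^{n-2}(\partial+a\lambda)-\prod_{a=1}^{n-1}(\partial+a\lambda)$ into sums $\sum_\alpha S(\cdot,\cdot)\lambda^{n-1-\alpha}\partial^\alpha$ using the same counting argument as for $g_n(z)$, re-indexes, and recognizes the coefficients as $C_\alpha=nS(n-2,n-1-\alpha)-S(n-1,n-1-\alpha)$ via Lemma \ref{Calpha}; in effect it replays the proof of Lemma \ref{h(z)} inside the operator algebra. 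You instead treat Lemma \ref{h(z)} as a black box and transfer its two forms of $h_{n-1}$ to operators by homogenizing, $H(x,y)=y^{n-1}h_{n-1}(x/y)$, and substituting $x=\lambda$, $y=\partial$, which is legitimate precisely because $\lambda$ is a scalar and the subring generated by $\lambda$ and $\partial$ is commutative. Your crux --- that naive substitution $z\mapsto\partial$ would factor the reciprocal polynomial, and that the choice $x=\lambda$, $y=\partial$ in the homogenization is what reverses the coefficients correctly --- is exactly the point where a careless version of this argument would fail, and you handle it cleanly. What each approach buys: yours is shorter, avoids duplicating the combinatorics, and makes the structural point that the operator factorization is nothing but the polynomial identity in disguise; the paper's is self-contained at the operator level (it needs only Lemma \ref{Calpha}, not Lemma \ref{h(z)}) and exhibits the $S$-number bookkeeping explicitly. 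One further point in your favor: you correctly read the lemma as a statement about the linear part $f^L_{n,\lambda}(u)$ --- the left-hand side as printed in the statement is missing the superscript $L$, since the full $f_{n,\lambda}(u)$ contains higher-degree terms (e.g.\ $f_{3,\lambda}(u)=2u''-2\lambda^2u$ only after cancellation) and cannot equal a purely linear operator applied to $u$.
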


\begin{proof}Expand the operator in the right-hand side of the above equation:
\begin{align*}
& (n-1)(\partial - \lambda)\prod_{a=1}^{n-2}(\partial + a\lambda) \\
= & \; n\partial \prod_{a=1}^{n-2} (\partial + a\lambda) - (\partial + (n-1)\lambda) \prod_{a=1}^{n-2} (\partial + a\lambda) \\
= & \; n\partial \prod_{a=1}^{n-2} (\partial + a\lambda) - \prod_{a=1}^{n-1} (\partial + a\lambda).
\end{align*}

\noindent For example, expanding
$$ \prod_{a=1}^{n-2} (\partial + a\lambda)$$
gives a sum of powers of the operator $\partial$ times powers of $\lambda$ times some integer. To obtain $\partial^\alpha$, we must multiply by $a\lambda$ a total of $n-2-\alpha$ times and add over all possibilities, giving
$$ S(n-2,n-2-\alpha)\lambda^{n-2-\alpha}\partial^\alpha.$$
Therefore,
\begin{align*}
& n\partial \prod_{a=1}^{n-2} (\partial + a\lambda) - \prod_{a=1}^{n-1} (\partial + a\lambda) \\
= & \; n\sum_{\alpha=0}^{n-2} S(n-2,n-2-\alpha)\lambda^{n-2-\alpha}\partial^{\alpha+1} - \sum_{\alpha=0}^{n-2} S(n-2,n-2-\alpha)\lambda^{n-2-\alpha}\partial^\alpha.
\end{align*}
Using the fact that $S(n-2,n-1)=0$, we re-index:
\begin{align*}
& n\sum_{\alpha=0}^{n-1} S(n-1,n-1-\alpha)\lambda^{n-1-\alpha}\partial^\alpha - \sum_{\alpha=0}^{n-1} S(n-1,n-1-\alpha)\lambda^{n-1-\alpha}\partial^\alpha. \\
= & \; \sum_{\alpha=0}^{n-1} \left(nS(n-2,n-1-\alpha) - S(n-1,n-1-\alpha)\right)\lambda^{n-1-\alpha}\partial^\alpha \\
= & \; \sum_{\alpha=0}^{n-1} \lambda^{n-1-\alpha} C_\alpha \partial^\alpha
\end{align*}
since $C_\alpha = nS(n-2,n-1-\alpha) - S(n-1,n-1-\alpha)$ (see Lemma \ref{Calpha}).
\end{proof}

\begin{theorem}Let $\lambda$ be a complex number and $n$ be a positive integer. All roots of $f_{n,\lambda}^L(u)$ are of the form
$$ u(x) = \beta_0 e^{\lambda x} + \sum_{a=1}^{n-2} \beta_a e^{-\lambda ax}$$
where $\beta_0, \beta_1, \ldots, \beta_{n-2}$ are complex.\end{theorem}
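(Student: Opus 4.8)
The plan is to reduce the condition $f_{n,\lambda}^L(u) = 0$ to a homogeneous linear differential equation with constant coefficients and then read off its solution space from the roots of the characteristic polynomial. The key input is Lemma \ref{decomp}, which factors the linear part as
$$ f_{n,\lambda}^L(u) = (n-1)(\partial - \lambda)\left(\prod_{a=1}^{n-2}(\partial + a\lambda)\right)(u).$$
Because each factor $\partial - \lambda$ and $\partial + a\lambda$ is a constant-coefficient linear operator, these factors commute, and the right-hand side is a single constant-coefficient operator of order $n-1$ applied to $u$. Hence $f_{n,\lambda}^L(u) = 0$ is precisely the constant-coefficient equation whose characteristic polynomial in the variable $y$ is $(n-1)(y - \lambda)\prod_{a=1}^{n-2}(y + a\lambda)$.

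First I would read the characteristic roots directly off this factorization: the factor $y - \lambda$ gives the root $y = \lambda$, while each factor $y + a\lambda$ gives the root $y = -a\lambda$ for $1 \le a \le n-2$. Next I would check that, for $\lambda \neq 0$, these $n-1$ roots are pairwise distinct, since they equal $\lambda$ times the distinct integers $1, -1, -2, \ldots, -(n-2)$ and scaling by the nonzero number $\lambda$ preserves distinctness. Having $n-1$ distinct simple roots, the standard theory of constant-coefficient linear ordinary differential equations then identifies the solution space as the $(n-1)$-dimensional span of $e^{\lambda x}$ together with the functions $e^{-a\lambda x}$, $1 \le a \le n-2$; writing a general element of this span gives exactly
$$ u(x) = \beta_0 e^{\lambda x} + \sum_{a=1}^{n-2} \beta_a e^{-\lambda a x}$$
with complex $\beta_0, \ldots, \beta_{n-2}$, as claimed. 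As a consistency check, these roots agree, under the reciprocal-and-scaling relationship between a differential operator and its generating polynomial, with the rational roots $1, -1, -\frac{1}{2}, \ldots, -\frac{1}{n-2}$ of $h_{n-1}$ recorded in Corollary \ref{h(z)-roots}.

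The one point that requires care, and the only real obstacle, is the degenerate case $\lambda = 0$: there the operator collapses to $(n-1)\partial^{n-1}$, whose kernel is the space of polynomials of degree at most $n-2$ rather than a span of distinct exponentials, so the stated form, in which every summand degenerates to a constant, fails to describe the full solution space. I would therefore take $\lambda \neq 0$, the regime relevant to the rest of the paper, so that the roots remain distinct and the clean exponential description holds. Apart from this case distinction, the argument is a direct application of a standard structural theorem once the factorization of Lemma \ref{decomp} is in hand, so I anticipate no deeper difficulty.
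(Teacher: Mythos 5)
Your proof is correct and takes essentially the same route as the paper: both arguments rest entirely on the factorization of Lemma \ref{decomp}, identify $f^L_{n,\lambda}(u)=0$ as a constant-coefficient linear equation of order $n-1$, and conclude via a dimension count that the $n-1$ exponentials span the solution space (the paper checks membership by hand---commuting the factor $(\partial + a\lambda)$ to act first on $e^{-a\lambda x}$---rather than citing the characteristic-polynomial theorem, but this is the same standard argument in different packaging). Your explicit handling of $\lambda = 0$ is in fact a genuine catch that the paper misses: the theorem as stated does not exclude $\lambda = 0$, yet for $\lambda = 0$ and $n \ge 3$ the operator degenerates to $(n-1)\partial^{n-1}$, whose kernel contains $u(x) = x$ while every function of the claimed form degenerates to a constant, so the statement is false there; correspondingly, the paper's assertion that the spanning set consists of ``distinct exponential functions and so are linearly independent'' silently requires $\lambda \neq 0$.
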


\begin{proof} Since $f^L_{n,\lambda}(u)$ is a linear differential polynomial, the theorem is equivalent to saying that the solution space of $f_{n,\lambda}^L(u) = 0$ is spanned by $$\setlist{e^{\lambda x}, e^{-\lambda x}, e^{-2\lambda x}, \ldots, e^{-(n-2)\lambda x}}.$$ This is a set of $n-1$ functions and $f^L_{n,\lambda}(u)=0$ is a $(n-1)$-th order differential equation. Furthermore, the elements in the set are distinct exponential functions and so are linearly independent. Therefore, the set is big enough to span the solution space. It remains to show that all of its members are roots of the linear K-L polynomial.

Arbitrarily choose $a$ from $\setlist{-1, 1, 2, \ldots, n-2}$. The choice of $a$ corresponds to both an operator $(\partial + a\lambda)$ in the decomposition given in Lemma \ref{decomp} and a function $e^{-a\lambda x}$ in the proposed spanning set: in fact,
$$ (\partial + a\lambda)e^{-a\lambda x} = -a\lambda e^{-a\lambda x} + a\lambda e^{-a\lambda x} = 0.$$
Observe that since the operators in Lemma \ref{decomp} are defined in terms of $\partial$ and complex scalars, they commute. Therefore, there exists an operator $\psi_a$ such that
$$ f_{n,\lambda}^L(e^{-a\lambda x}) = \psi_a \circ (\partial + a\lambda) e^{-a\lambda x} = 0.$$
This gives that all of the members of $\setlist{e^{\lambda x}, e^{-\lambda x}, e^{-2\lambda x}, \ldots, e^{-(n-2)\lambda x}}$ are roots of the linear K-L polynomial. Since the set is large enough and is linearly independent, it spans the solution space of $f_{n,\lambda}^L(u) = 0$, and any root of $f_{n,\lambda}^L(u)$ has the form
$$ u(x) = \beta_0 e^{\lambda x} + \sum_{a=1}^{n-2} \beta_a e^{-\lambda ax}$$
as claimed. \end{proof}

\subsection{Loosening the $\lambda =0$ restriction}

\noindent The reader may wonder, when the original results allow for nonzero $\lambda$---and when a feature of the alternate proof given in the third chapter is that we treat $\lambda = 0$ in the same case---why this chapter and the following one require $\lambda$ to be nonzero.

Notice that key to this proof was writing $u(x)$ as a sum of exponential functions as a consequence of the fact that $u^{(m)} = \lambda^m u$. If $\lambda =0$, then $u(x)$ is instead a polynomial. Furthermore, if $\lambda = 0$, then the linear part
$$ f^L(u) = \sum_{\alpha = 0}^{n-1} \lambda^{n-1-\alpha} C_\alpha u^{(\alpha)} $$
might vanish without providing us any information about $u$. 

If the index $\alpha$ is less than $n-1$, then that term will be multiplied by zero. Thus all that remains is
$$ f^L(u) = C_{n-1}u^{(n-1)} = 0 $$
which, again, reveals no information when $n-1 > m$.

This leads us to state the following theorem, which provides us with information on a potential pattern of $f_{n,0}(u)$ that vanish if $u^{(m)} = 0$.

\begin{theorem}\label{dmu=0} Let $m$ be a positive integer such that $u^{(m)} = 0$. Suppose there exists a smallest positive integer $k$ such that $f_{k+1,\lambda}(u) =0$. Then $u^{(k)} = 0$.\end{theorem}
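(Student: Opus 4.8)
The plan is to work throughout with $\lambda=0$: the hypothesis $u^{(m)}=0$ is exactly $u^{(m)}=\lambda^m u$ with $\lambda=0$, so $f_{k+1,\lambda}=f_{k+1,0}$. Since $u^{(m)}=0$, the function $u$ is a polynomial; write $d=\deg u$, the case $u\equiv 0$ being trivial (then $f_{n,0}(u)=0$ for all $n$, the minimal $k$ is $1$, and $u'=0$). I would reduce the whole theorem to the single estimate

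\smallskip
\noindent(*)\quad if $d\ge 1$ and $2\le n\le d+1$, then $f_{n,0}(u)\ne 0$.
\smallskip

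\noindent Indeed, (*) forces any $n\ge 2$ with $f_{n,0}(u)=0$ to satisfy $n\ge d+2$; applied to $n=k+1$ this gives $k\ge d+1>d$, i.e. $u^{(k)}=0$. (Minimality of $k$ is then only used to guarantee such a $k$ exists.)

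To prove (*) I would first establish an exponential generating function for the family. With $\lambda=0$ the operator in Definition \ref{K-L-poly} is a power of $\partial-u$, and reindexing $j=n-k$ gives $f_{n,0}(u)=\sum_{j=0}^{n}\binom{n}{j}(\partial-u)^{j}u^{\,n-j}$. Fixing an antiderivative $U$ of $u$, the conjugation identity $\partial-u=e^{U}\partial e^{-U}$ yields $(\partial-u)^{j}=e^{U}\partial^{j}e^{-U}$, so $f_{n,0}(u)=e^{U}\sum_{j}\binom{n}{j}\partial^{j}\!\big(e^{-U}u^{\,n-j}\big)$. Summing against $t^{n}/n!$, recognizing the shift operator $\sum_{j}\tfrac{t^{j}}{j!}\partial^{j}=e^{t\partial}$, and cancelling the factors $e^{\pm U}$ collapses everything to
\[
\sum_{n\ge 0}\frac{t^{n}}{n!}\,f_{n,0}(u)=\exp\!\big(\Phi(x,t)\big),\qquad \Phi(x,t)=U(x)-U(x+t)+t\,u(x+t).
\]
A Taylor expansion then gives the clean closed form $\Phi=\sum_{p\ge 1}\frac{p}{(p+1)!}\,u^{(p)}(x)\,t^{p+1}$, which is the heart of the matter: every term of $\Phi$, hence every differential monomial occurring in $f_{n,0}(u)=n!\,[t^{n}]e^{\Phi}$, is a product of derivatives $u^{(p)}$ with $p\ge 1$ \emph{only} --- no bare factor of $u$ ever survives.

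With this structural fact, (*) is a degree count. A monomial $\prod_i u^{(\alpha_i)}$ contributing to $[t^{n}]e^{\Phi}$ has $j$ factors with all $\alpha_i\ge 1$ and $\sum(\alpha_i+1)=n$, so $\sum\alpha_i=n-j\ge j$, forcing $j\le \lfloor n/2\rfloor$; and evaluated at a degree-$d$ polynomial its $x$-degree is $\sum_i(d-\alpha_i)=j(d+1)-n$, which is strictly increasing in $j$. Hence the top-degree part of $f_{n,0}(u)$ comes from the unique maximal-$j$ monomial, namely $(u')^{n/2}$ for $n$ even and $u''(u')^{(n-3)/2}$ for $n$ odd. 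Reading its coefficient off $e^{\Phi}$ (take the $p=1$ term of $\Phi$ in each factor, with one $p=2$ term when $n$ is odd) yields $\frac{n!}{(n/2)!\,2^{n/2}}$, respectively $\frac{n!}{((n-3)/2)!\,3\cdot 2^{(n-3)/2}}$, both strictly positive. For $2\le n\le d+1$ we have $d\ge 1$ (even $n$) or $d\ge n-1\ge 2$ (odd $n\ge 3$), so every factor of this top monomial has order at most $2\le d$ and it is a nonzero function; being the sole term of maximal $x$-degree $\lfloor n/2\rfloor(d+1)-n$, its contribution cannot be cancelled, and $f_{n,0}(u)\ne 0$. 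This establishes (*) and hence the theorem.

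The main obstacle is the second step: guessing and rigorously justifying the generating function, and in particular extracting the closed form $\Phi=\sum_p\frac{p}{(p+1)!}u^{(p)}t^{p+1}$ that exhibits the vanishing of all coefficients $C_\pi$ attached to a $\pi$ containing a plain factor $u$. Once that is in hand the degree bookkeeping is routine; without it one is pushed into the much messier route of evaluating the alternating sums defining $C_\pi$ for the top-degree $\pi$ and verifying directly that they do not all vanish.
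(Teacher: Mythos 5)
Your proposal is correct, and it takes a genuinely different route from the paper's. The paper (reading the hypothesis, as you do, with $\lambda=0$) splits into two cases: if $k\ge m$ then $u^{(k)}=0$ trivially; if $k<m$ it passes to the linear part, noting that at $\lambda=0$ every term of $f^{L}_{k+1,0}(u)=\sum_{\alpha=0}^{k}\lambda^{k-\alpha}C_{\alpha}u^{(\alpha)}$ is killed except $C_{k}u^{(k)}$, and then computes $C_{k}=(k+1)S(k-1,0)-S(k,0)=k\neq 0$ from Lemma \ref{Calpha}. That argument is short because it reuses machinery already built, but it silently substitutes ``$f^{L}_{k+1,0}(u)=0$'' for the stated hypothesis ``$f_{k+1,0}(u)=0$''; for a particular function $u$ these are not obviously equivalent, since the nonlinear terms of $f_{k+1,0}(u)$ could a priori cancel the linear one. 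Your argument needs no such reduction and no case split: the identity
\[
\sum_{n\ge 0}\frac{t^{n}}{n!}\,f_{n,0}(u)\;=\;\exp\left(\sum_{p\ge 1}\frac{p}{(p+1)!}\,u^{(p)}(x)\,t^{p+1}\right),
\]
derived from $\partial-u=e^{U}\partial e^{-U}$ and the shift operator (and rigorous coefficient-by-coefficient here, since $u$ is a polynomial, so every series involved is that of an entire function), shows that $f_{n,0}(u)$ is a polynomial with positive coefficients in $u',u'',\dots$ alone, with no bare factor of $u$; the degree count on the unique maximal-$j$ monomial, $(u')^{n/2}$ for even $n$ and $u''(u')^{(n-3)/2}$ for odd $n$, then gives your stronger claim (*) that $f_{n,0}(u)\neq 0$ whenever $2\le n\le \deg u+1$. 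I checked your closed form against $f_{2,0}(u)=u'$, $f_{3,0}(u)=2u''$ (the paper's own computation at $\lambda=0$), and $f_{4,0}(u)=3u'''+3(u')^{2}$, and your two leading coefficients are correct. What your route buys: it works with the whole polynomial, thereby closing the full-versus-linear-part gap noted above; it identifies exactly which indices $n\ge 2$ admit $f_{n,0}(u)=0$ (only $n\ge \deg u+2$); and it yields the paper's corollary immediately. What it costs is the generating-function derivation itself, which the paper avoids by leaning on the coefficient formula of Lemma \ref{Calpha}.
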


\begin{proof} If $k \ge m$, then successive differentiation yields $u^{(k)} = 0$ as claimed. Suppose instead that $k < m$. As previously shown,
$$ f^L_{k+1,0}(u) = \sum_{\alpha = 0}^{k} \lambda^{k-\alpha} C_\alpha u^{(\alpha)} $$
has disappearing terms for every index except $\alpha = k$. Then
$$ f^L_{k+1,0}(u) = C_{k}u^{(k)} = 0.$$
It remains to show that $C_{k} \neq 0$. Recall from Lemma \ref{Calpha} that an expression for $C_\alpha$ is
$$ C_\alpha = (k+1)S(k-1,k-\alpha) - S(k,k-\alpha).$$
Then
$$ C_{k} = (k+1)S(k-1,0) - S(k,0) = k + 1 - 1 = k.$$
We required $k \ge 1$, so $u^{(k)} = 0$.
\end{proof}

\noindent This reveals a relationship between a vanishing derivative of $u$ with the smallest order and any pattern of $f_{n,0}(u) = 0$ satisfied by $u$:

\begin{cor} If $m$ is the smallest integer such that $u^{(m)} = 0$, then any pattern of $f_{n,0}(u)$ that vanish must start at $n \ge m+1$.\end{cor}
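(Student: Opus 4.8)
The plan is to obtain this corollary directly from Theorem \ref{dmu=0}; essentially all of the work lies in aligning the indices, so there is no new computation to perform. Suppose a vanishing pattern exists. A quick expansion shows that $f_{1,\lambda}(u) = 0$ for every smooth $u$ (the $k=0$ term contributes exactly $-u$, which cancels the leading $u^n = u$), so the index $n=1$ is trivially a root and carries no information about $u$. Accordingly I would take the start of the pattern to be $n_0$, the smallest integer $n \ge 2$ with $f_{n,0}(u) = 0$. Writing $k = n_0 - 1 \ge 1$, this $k$ is a positive integer, and it is precisely the smallest positive integer for which $f_{k+1,0}(u) = 0$.

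Next I would feed this $k$ into Theorem \ref{dmu=0}. The hypothesis of that theorem only asks for the existence of some positive integer annihilating $u$ under repeated differentiation, and the corollary supplies one, namely $m$ itself. The theorem then concludes $u^{(k)} = 0$, that is, $u^{(n_0 - 1)} = 0$. The minimality of $m$ now does the remaining work: since $m$ is by hypothesis the smallest integer with $u^{(m)} = 0$, the vanishing $u^{(n_0-1)} = 0$ forces $n_0 - 1 \ge m$, whence $n_0 \ge m+1$. As $n_0$ was the start of an arbitrary vanishing pattern, every such pattern of $f_{n,0}(u)$ begins at $n \ge m+1$, as claimed.

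The statement is not so much hard as delicate to set up, and the main point to get right is purely a matter of bookkeeping: the quantity named $k$ in Theorem \ref{dmu=0} is $n_0 - 1$ rather than $n_0$, and one must dispose of the trivial identity $f_{1,0}(u) = 0$ so that the phrase ``where a pattern starts'' is well posed. Once the index shift and the $n=1$ triviality are handled, the corollary falls out immediately with no further argument.
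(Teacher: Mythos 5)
Your proposal is correct and is exactly the deduction the paper intends: the corollary is stated without proof as an immediate consequence of Theorem \ref{dmu=0}, obtained by letting $k = n_0 - 1$ for the start $n_0$ of the pattern, concluding $u^{(k)} = 0$, and invoking the minimality of $m$ to force $k \ge m$. Your additional observation that $f_{1,\lambda}(u) = 0$ identically (so the start of a pattern is well posed only for $n \ge 2$, matching the theorem's requirement that $k$ be positive) is a correct and worthwhile piece of bookkeeping that the paper leaves implicit.
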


\subsection{Conclusion}

\noindent This chapter demonstrated the following facts about the roots of the Kuchment-Lvin polynomials:

\begin{enumerate}
	\item If $u$ is a root of a linear K-L polynomial such that $u^{(m)} = \lambda^mu$ and $\lambda \neq 0$, then either $u' = \lambda u$ or $u'' = \lambda^2 u$.
	\item Therefore, the only patterns of vanishing linear K-L polynomials induced by $u^{(m)} = \lambda^m u$ with nonzero $\lambda$ occur for $m=1$ and $m=2$.
	\item If $\lambda = 0$, then any pattern of vanishing linear K-L polynomials must begin with the polynomial of index at least $m+1$.
\end{enumerate}

\noindent Certainly there is more to learn about the case where $\lambda =0$. To obtain more results, we suspect one must move away from the linear part of $f_{n,0}(u)$ to terms of higher degree. Also, we have only made claims about the linear part of the polynomial: what can we infer about the whole polynomial?

 Additionally, we would like to extend Theorem \ref{thm5} to the entire K-L polynomial. If $u$ is a smooth function and $m$ is an integer such that $m \ge 3$ and $u^{(m)} = \lambda^m u$, then $u = \displaystyle\sum_{r=0}^{m-1} \beta_r e^{\lambda \zeta^r x}$. Therefore $f_{n,\lambda}(u)$ is a polynomial in the functions $e^{\lambda x}, \ldots, e^{\lambda \zeta^{m-1}x}$.

The Lindemann-Weierstrass theorem \cite{Lang} states that if $\gamma_0, \ldots, \gamma_{m-1}$ are linearly independent algebraic numbers over $\mathbf{Q}$, then $e^{\gamma_0 x}, \ldots, e^{\gamma_{m-1} x}$ are algebraically independent over $\mathbf{C}$, {\it i.e.} any polynomial in $m$ variables with complex coefficients satisfied by $e^{\gamma_0 x}, \ldots, e^{\gamma_{m-1}x}$ must be identically zero.

Therefore, if $\lambda, \lambda \zeta, \ldots, \lambda \zeta^{m-1}$ were linearly independent over $\mathbf{Q}$---which they are not---and $u = \displaystyle\sum_{r=0}^{m-1} \beta_r e^{\lambda \zeta^r x}$, then $f_{n,\lambda}(u)$ must be identically zero when viewed as a polynomial of these exponential functions over $\mathbf{C}$. This would force $f_{n,\lambda}^L(u)$ to be identically zero, which would allow us to extend Theorem \ref{thm5} to the whole polynomial.

Of course,
\begin{align*}
& \lambda + \lambda \zeta + \cdots + \lambda \zeta^{m-1} \\
= & \; \lambda \left( 1 + \zeta + \cdots + \zeta^{m-1} \right) = 0, 
\end{align*} so something else must be done.



\bibliographystyle{amsplain}\bibliography{references}

\end{document}